\newcommand{\NN}{{\mathbb N}} 
\theoremstyle{plain}
\newtheorem{theorem}{Theorem}[section]
\newtheorem{proposition}[theorem]{Proposition}
\newtheorem{lemma}[theorem]{Lemma}
\theoremstyle{definition}
\newtheorem{example}[theorem]{Example}
\newtheorem{remark}[theorem]{Remark}
\theoremstyle{Conjecture}
\newcommand{\Spvek}[2][r]{%
	\gdef\@VORNE{1}
	\left(\hskip-\arraycolsep%
	\begin{array}{#1}\vekSp@lten{#2}\end{array}%
	\hskip-\arraycolsep\right)}
\def\vekSp@lten#1{\xvekSp@lten#1;vekL@stLine;}
\def\vekL@stLine{vekL@stLine}
\def\xvekSp@lten#1;{\def\temp{#1}%
	\ifx\temp\vekL@stLine
	\else
	\ifnum\@VORNE=1\gdef\@VORNE{0}
	\else\@arraycr\fi%
	#1%
	\expandafter\xvekSp@lten
	\fi}
\begin{document}
	
	\title{Paw{\l}ucki-Ple\'{s}niak extension operator for Non-Markov Sets}

	\author{Alexander Goncharov}
	\address{Department of Mathematics, Bilkent University, 06800 Ankara, Turkey}
	\email{goncha@fen.bilkent.edu.tr}
	\author{Yaman Paksoy}
	\address{Department of Mathematics, Bilkent University, 06800 Ankara, Turkey}
	
	\email{yamanpaks@hotmail.com}

	\date{Received: date / Accepted: date}

	\subjclass[2010]{Primary 46E10;
Secondary 41A05, 41A10}
	\keywords{ Whitney functions, extension operators,
Cantor-type sets, Markov's Property.}
	\begin{abstract}
		We show that Paw{\l}ucki-Ple\'{s}niak's operator is bounded for some non-Markov sets.
     \end{abstract}
		\maketitle
	\section{Introduction}

By Whitney's extension theorem \cite{W}, for each compact set $K\subset {\Bbb R}^d,$ there is a continuous linear operator
extending jets of finite order from  ${\mathcal E}^p(K)$ to functions defined on the whole space, preserving the order of differentiability.
However, for $p=\infty,$ such an operator does not exist in the general case. Let us say that $K$  has the {\it extension property (EP)}
if there exists a linear continuous extension operator  $W:{\mathcal E}(K) \longrightarrow C^{\infty}({\Bbb R}^d)$. By Tidten (\cite{T}),
a set $K$ has $EP$ if and only if the space ${\mathcal E}(K)$ possesses the dominating norm. By \cite{GU}, there is no complete description
of $EP$ in terms of  densities of measures, Hausdorff contents or related characteristics.
For a short review of known extension operators  we refer the reader to Section 2 in \cite{GU}.

One of the approaches is due to W. Paw{\l}ucki and W. Ple\'{s}niak. In \cite{PP}, they present an extension operator $W$ in the form of a telescoping series
containing Lagrange interpolating polynomials with Fekete nodes. The operator initially was considered for uniformly polynomially cuspidal compact sets.
Later, in \cite{P}, the result was extended to any Markov set. By T.3.3 in \cite{P}, for each $C^{\infty}$ determining compact set $K$, the suggested
operator is continuous in the so-called Jackson topology  $\tau_J$ (see \cite{P} and Section 2 in \cite{AG} for the definition and some properties of
$\tau_J$) if and only if $\tau_J$ coincides with the natural topology  $\tau$ of the space ${\mathcal E}(K)$ and this happens if and only if the set $K$
is Markov. Since $\tau_J$ is not stronger then  $\tau,$  each Markov set has $EP$. However, by  \cite{ag96} and  \cite{ag97},
the inverse implication is not valid. Thus, in the case of non-Markov compact set with $EP$, the Paw{\l}ucki-Ple\'{s}niak extension operator is not continuous in $\tau_J$, yet this does not exclude the possibility for it to be bounded in $\tau$. Our aim is to check continuity of the operator in the
natural topology of the space.

In the construction of the extension operator in \cite{P}, Ple\'{s}niak used four main components:\\
1) Markov property of the set $K$, which implies\\
2) the possibility of a suitable individual extension of polynomials from the set $K$ to some neighborhood of the set, while preserving the norm of polynomials\\
3) a moderate growth of the Lebesgue constants corresponding to interpolation at Fekete points\\
4) the classical Jackson theorem on the polynomial approximation of $C^{\infty}-$ functions on the interval.\\

We analyze these components for non-Markov sets with $EP$. The paper is organized as follows. In Section 2, we repeat the argument from  \cite{P} for a
Markov set $K$ on the line such that $K$ contains an array of interpolating nodes with the polynomial growth of the corresponding Lebesgue constants.
Section 3 considers the family $K^\alpha, \alpha>1,$ of non-Markov Cantor-type sets. They have $EP$ provided  $\alpha\leq 2$.
Endpoints of intervals in Cantor procedure are enumerated in a special way. Some properties of the resulting sequence $(x_k)_{k=1}^{\infty}$ are considered, which makes it possible to estimate the corresponding Lebesgue constants.
In Section 4, we introduce the Markov $M_N^{(p)}$ factor as the norm of the operator of $p$-fold differentiation in the space of polynomials of degree at most $N$. In contrast to the Markov case, for small sets, the value of $M_N^{(p)}$ is substantially less than the $p-$th power of the usual Markov factor $M_N.$ The fast growth of $M_N^{(p)}$ can be neutralized (Section 5) by an ultra-fast rate of polynomial approximation of functions from
${\mathcal E}(K^\alpha)$. In Section 6 we show that the boundedness of the operator depends not on the rate of growth of $M_N^{(p)}$ factors
but rather on the possibility of suitable individual extensions of polynomials. In the considered case, the polynomials involved in the interpolation process form a topological basis in the space ${\mathcal E}(K)$. This creates a bridge to Mityagin's method of extension \cite{mi}.
Section 7 contains the main result: at least in the considered case,  Paw{\l}ucki-Ple\'{s}niak's operator is continuous in the natural topology
of the space. Moreover, Ple\'{s}niak's argument can be used for the given non-Markov set as well.

\section{Paw{\l}ucki-Ple\'{s}niak's operator for Markov sets}

For the convenience of the reader, we repeat the main points of \cite{P}. We shall restrict the discussion to a perfect compact subset $K$ of the line
with a closed interval $I$ containing $K$.

Let ${\mathcal P}_n$ denote the set of all polynomials of degree at most $n$ and $|\cdot|_K$ (later $|\cdot|_{0,K}$) stand for the uniform norm on $K.$
We say that $K$ is {\it Markov} (or $K$ has {\it Markov's property (MP)}) if there are constants $C_1$ and $r$ with
\begin{equation} \label{MP}
|Q^{(p)}|_K \leq  C_1 n^{rp} \,|Q|_K \,\,\,\,\,\mbox {for all}\,\,\,\,\,\,Q\in {\mathcal P}_n, p\in \Bbb N.
\end{equation}
Given $\delta>0$, let $K_{\delta}$ denote $\{x\in \Bbb R: \mathrm{dist}(x,K)\leq \delta\}$ and $u_{\delta}$ be a $C^{\infty}$ function with the properties:
$0\leq u_{\delta}\leq 1, u_{\delta}=1$ on $K$, $u_{\delta}=0$ on $\Bbb R\setminus K_{\delta},$ and $|u^{(j)}_{\delta}(x)|\leq c_j \delta^{-j}$ for each
$j\in \Bbb N, x\in \Bbb R.$ The constants $c_j$ do not depend on $K$. Although the existence of such a function is well known, a specific version of $u_{\delta}$ is considered in Section 6. Suppose $K$ satisfies \eqref{MP}. For each polynomial $Q$ of degree $n$ let us take $\delta=n^{-r}.$
By means of Taylor's expansion, it's easy to show that
\begin{equation} \label{IE}
|Q(x)| \leq C_1\,e\, |Q|_K\,\,\,\,\mbox {for}\,\,\,\,\,Q\in {\mathcal P}_n, x\in K_{n^{-r}}.
\end{equation}
In addition, if $p\leq n$ then for each $x\in \Bbb R$ we have
\begin{equation} \label{IE1}
|(Q\cdot u_{\delta})^{(p)}(x)|\leq C_2 n^{rp}\, |Q|_K,
\end{equation}
where $ C_2$ depends only on $p$. Indeed, it is evident for $x\in K$ or $x\notin K_{\delta}$. Otherwise, by Leibnitz's rule,
$$|(Q\cdot u_{\delta})^{(p)}(x)|\leq \sum_{j=0}^p  \binom{p}{j}c_j n^{rj} C_1e|Q^{(p-j)}|_K\leq C_2 n^{rp}|Q|_K$$ with $C_2:=C_1^2e\sum_{j=0}^p\binom{p}{j}c_j.$

Let $\mathcal X=(x_{k,n})_{k=1, n=1}^{n, \infty}$ be an infinite triangular matrix of points from $K$ such that each row $\mathcal X_n$ consists of distinct elements. For a fixed $n$, the points of $\mathcal X_n$  determine the polynomial $\omega_{n}(x) = \prod_{k=1}^{n}(x-x_{k,n})$,
the fundamental Lagrange polynomials $l_{k,n}(x)= \frac{\omega_{n}(x)}{(x-x_{k,n})\omega'_{n}(x_{k,n})}$ with $1\leq k\leq n$, the Lebesgue function
$\lambda_n(x)=\sum _{k=1}^{n}|l_{k,n}(x)|$, and the Lebesgue constant $\Lambda_n(K)=\sup_{x\in K} \lambda_n(x).$ Given function $f$ defined on $K$, by $L_n(f,\cdot;\mathcal X_n)$ we denote the corresponding Lagrange interpolating polynomial, by $L_n(\cdot, \mathcal X_n)$ the interpolating projection,
so $\Lambda_n(K)$ is the sup-norm  of $L_n(\cdot, \mathcal X_n)$ in $C(K)$. Here, $ l_{k,n}, L_n\in {\mathcal P}_{n-1},$ so the index $n$ of the Lebesgue constant corresponds to the number of interpolating points. Suppose $\mathcal X$ is chosen in a such way that the sequence $(\Lambda_n(K))_{n=1}^{\infty}$ has at most polynomial growth: there are constants $C_3$ and $R$ such that
\begin{equation} \label{LC}
\Lambda_{n+1}(K)\leq C_3 n^R  \,\,\,\,\,\mbox {for all}\,\,\,\,\,n\in \Bbb N.
\end{equation}
Of course, Fekete points provide this condition with $R=1$, see for instance Section 2 in \cite{P}.

Our main object is the operator $W:{\mathcal E}(K) \longrightarrow C^{\infty}({\Bbb R})$ which is defined as follows
\begin{equation} \label{W}
W(f,x)= L_1(f,x;\mathcal X_1)\cdot u_{\delta_1}(x)+\sum _{n=1}^{\infty}[L_{n+1}(f,x;\mathcal X_{n+1})- L_n(f,x;\mathcal X_n)]\cdot u_{\delta_n}(x).
 \end{equation}

Recall that, by \cite{W}, the Whitney space ${\mathcal E}(K)$ consists of traces on $K$ of functions from $C^{\infty}(I)$ and the Whitney topology $\tau$ is given by the seminorms
$$ \|\,f\,\|_{q} = |f|_{q,K} + sup \left\{ |(R_{y}^{q} f)^{(k)}(x)|\cdot |x-y|^{k-q} : x,y \in K ,x \neq y, k = 0,1,...q \right\} $$
for $q\in {\Bbb Z}_+, $   where $|f|_{q,K} = sup \{ |f^{(k)} (x)| : x \in K, k \leq q \} \text { and } R_{y}^{q}f(x) = f(x) - T_{y}^{q} f(x)$ is the Taylor remainder. By the open mapping theorem, for any $q$ there exists $C$ such that
\begin{equation}\label{quot}
 inf \,\, |\,F\,|_{\,q,I}\leq C\,||\,f\,||_{\,q},
\end{equation}
where the infimum is taken over all extensions of $f$ to $F\in C^{\infty}(I),$ see, e.g., (2.3) in \cite{basW}.\\

Following Zerner \cite{Z}, let us consider the given below seminorms in  ${\mathcal E }(K)$
$$ d_{-1}(f) = |f|_{K},\,\,d_{0}(f) = E_0(f,K), \,\, d_{k}(f) = \sup_{n\geq 1} n^k\,E_n(f,K)\,\,\mbox{for}\,\, k\geq 1,$$
where $E_n(f,K):=\min_{P\in {\mathcal P}_n}|f-P|_K$ is the best approximation of $f$ by $n-$th degree polynomials. Since $K$ is perfect,
the  {\it Jackson topology} $\tau_J$, given by $(d_k)$, is Hausdorff. By Jackson's theorem, $\tau_J$ is well-defined and is not stronger than
$\tau$ (see \cite{AG} for more details).  By Ple\'{s}niak (T.3.3 in \cite{P}), the operator $W$ is continuous in
 $\tau_J \Leftrightarrow  \tau_J=\tau \Leftrightarrow ({\mathcal E}(K), \tau_J)$ is complete $\Leftrightarrow K$ is Markov.
 We now extract a part of this theorem in the following form.
\begin{theorem}(\cite{P}) \label{WP}
Let  $K\subset \Bbb R$ be Markov and the operator $W$ be given by an array $\mathcal X$ satisfying \eqref{LC}. Then $W$ is bounded in $\tau$.
\end{theorem}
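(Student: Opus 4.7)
The plan is to show that for every $q\in\mathbb{Z}_+$ there exist $k'$ and $C>0$ with $|W(f)|_{q,\mathbb{R}} \le C \|f\|_{k'}$, i.e.\ that $W:(\mathcal{E}(K),\tau)\to C^\infty(\mathbb{R})$ is continuous between Fréchet spaces. Note first that all terms in \eqref{W} are supported in $K_{\delta_1}$, which is bounded, so the sup over $\mathbb{R}$ reduces to a sup over a compact set.

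Write $Q_n := L_{n+1}(f,\cdot;\mathcal{X}_{n+1}) - L_n(f,\cdot;\mathcal{X}_n)$, which lies in $\mathcal{P}_n$. The standard Lebesgue estimate gives, for any $P\in\mathcal{P}_{n-1}$, $f-L_n(f) = (f-P)-L_n(f-P,\cdot;\mathcal{X}_n)$, whence
\[
|f - L_n(f,\cdot;\mathcal{X}_n)|_K \le (1+\Lambda_n(K))\,E_{n-1}(f,K).
\]
Combining this for indices $n$ and $n+1$ and using \eqref{LC} yields a polynomial-in-$n$ bound
\[
|Q_n|_K \le C_4\, n^R\, E_{n-1}(f,K).
\]
Since $Q_n\in\mathcal{P}_n$, for every $q\le n$ the Markov-based inequality \eqref{IE1} applied to $Q_n$ with $\delta = \delta_n = n^{-r}$ gives
\[
|(Q_n\cdot u_{\delta_n})^{(q)}(x)| \le C_2\, n^{rq}|Q_n|_K \le C_5\, n^{rq+R}\, E_{n-1}(f,K),\qquad x\in\mathbb{R}.
\]

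To sum these bounds, pick any $k\ge rq + R + 2$. By definition of the Jackson seminorm, $E_{n-1}(f,K) \le d_k(f)/(n-1)^k$, so the tail
\[
\sum_{n\ge q} n^{rq+R}\, E_{n-1}(f,K) \le C_6\, d_k(f)
\]
converges. Since Jackson's theorem yields $\tau_J \preceq \tau$, there are $k'$ and $C_7$ with $d_k(f)\le C_7\|f\|_{k'}$. The first $q$ terms of \eqref{W}, together with $L_1(f,\cdot;\mathcal{X}_1)\cdot u_{\delta_1}$, are finitely many Lagrange interpolants (each of $C^q$-norm bounded by a multiple of $|f|_K$ through $\Lambda_n(K)$) times fixed smooth cutoffs, hence controlled by a multiple of $\|f\|_0$. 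Putting everything together produces an estimate of the desired form $|W(f)|_{q,\mathbb{R}}\le C\|f\|_{k'}$.

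The conceptual ingredients are exactly those listed in the introduction: (i) Markov's property, which was already packaged into \eqref{IE1}; (ii) polynomial growth of the Lebesgue constants, which controls $|Q_n|_K$; and (iii) Jackson's theorem, which converts $\tau$-bounds into decay of $E_n(f,K)$ to tame the geometric loss $n^{rq+R}$ in \eqref{IE1}. I expect no real obstacle beyond bookkeeping the constants: the only subtle point is making sure the condition $p\le n$ needed in \eqref{IE1} is satisfied, which forces us to separate the first $q$ terms of the series from the rest, and to invoke $\tau_J\preceq\tau$ in order to bound $d_k(f)$ by a $\tau$-seminorm (without this last step, one would only conclude continuity in $\tau_J$, which is Pleśniak's original statement).
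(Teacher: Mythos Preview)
Your proof is correct and follows essentially the same approach as the paper: bound $|Q_n|_K$ via Lebesgue's lemma and \eqref{LC}, apply the Markov-based extension estimate \eqref{IE1}, and then use Jackson's theorem to obtain summable decay of $E_{n-1}(f,K)$. The only cosmetic difference is that the paper invokes Jackson's theorem on the interval $I$ together with the open-mapping inequality \eqref{quot} directly, whereas you package the same step as the inequality $\tau_J\preceq\tau$ (i.e.\ $d_k(f)\le C_7\|f\|_{k'}$); these are equivalent formulations. Your explicit treatment of the first $q$ terms to respect the hypothesis $p\le n$ in \eqref{IE1} is a bookkeeping detail the paper leaves implicit.
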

\begin{proof} Let $G_n$ denote the $n-$th term of the series \eqref{W} with $\delta_n=n^{-r},$ where $r$ is taken from \eqref{MP}.
Fix $p\in \Bbb N$ and $x\in  \Bbb R$.  The expression in square brackets is a polynomial of degree $n$, so, by \eqref{IE1},
$|G_n^{(p)}(x)|\leq C_2 n^{rp}\, |L_{n+1}(f)- L_n(f)|_K.$ Lebesgue's Lemma (see, e.g., \cite{VL}, Chapter 2, Prop. 4.1) now yields
$|L_{n+1}(f)- L_n(f)|_K\leq |L_{n+1}(f)-f|_K +|L_n(f)- f|_K\leq (\Lambda_{n+1}(K)+1)E_n(f,K)+ (\Lambda_n(K)+1)E_{n-1}(f,K).$
By \eqref{LC}, $|G_n^{(p)}(x)|\leq C_4 n^{rp+R}E_{n-1}(f,K),$  where $C_4$ does not depend on $n$ and $f$. Clearly,
$E_{n-1}(f,K)\leq E_{n-1}(F,I),$ where $F$ is any extensions of $f$ to $F\in C^{\infty}(I).$ Jackson's Theorem (see, e.g., \cite{VL}, Chapter 7, Cor. 6.5)
and \eqref{quot} show that $|G_n^{(p)}(x)|$ is a term of a uniformly convergent series, which completes the proof.
 \end{proof}

We aim to show that a modified version of this theorem can also be applied to some non-Markov sets.

\section{Non-Markov sets with the extension property}

 We consider the family $K^\alpha$ of sets proposed in \cite{ag97}. A geometrically symmetric Cantor set $K$ is the intersection
 $ \bigcap\limits_{s=0}^\infty E_s$, where $E_0 = [0,1]$ and for each $s\in \NN$, the set $E_s$ is a union of $2^s$ closed {\it basic} intervals $I_{j,s}$, $j=1,2,...,2^s$ of length $\ell_s$. Recursively, $E_{s+1}$ is obtained by replacing each interval $I_{j,s}$ by two {\it adjacent} subintervals $I_{2j-1,s+1}$ and $I_{2j,s+1},$ where the distance between them is $h_s= \ell_s - 2\ell_{s+1}$.

For $\alpha > 1$ and $\ell_1 < 1/2$ with $2\ell_1^{\alpha-1}<1$,  the set $K^\alpha$ is the Cantor set associated to the lengths of intervals satisfying
$\ell_s = \ell_{s-1}^\alpha = \ell_1^{\alpha^{s-1}}$ for $s\geq 2$. By \cite{Pl}, $K^\alpha$ are not Markov. On the other hand, by \cite{ag97} and \cite{A=2},
the set $K^\alpha$ has the extension property if and only if $\alpha\leq 2.$

We follow the notation of \cite{QE}. Let $X_0:=\{0, 1\}$ and, for $k\in \Bbb N,$ let $X_k$ be the set of endpoints of intervals from $E_k$ that are not endpoints of intervals from $E_{k-1}.$ Thus, $X_1:=\{\ell_1, 1-\ell_1\}, X_2:=\{\ell_2, \ell_1-\ell_2, 1-\ell_1+\ell_2, 1-\ell_2\},$ etc.
We refer $s-${\it th type} points to the elements of $X_s.$
Set $Y_s=\cup_{k=0}^s X_k.$ Clearly, $\#(X_s)=2^s$ for $s\in \Bbb N$ and $\#(Y_s)=2^{s+1}$ for $s\in \Bbb Z_+.$ Here and below, $\#(Z)$ denotes the
cardinality of a finite set $Z$.

Let $Z=(z_k)_{k=1}^{M}\subset \Bbb R$ and $\omega_M(x) = \prod_1^M(x-z_k).$ For a fixed $x\in \Bbb R,$ by $d_k(x,Z)$ we denote the distances $|x-z_{j_k}|$ from $x$ to points of $Z$, where these distances are arranged in the nondecreasing order, so $d_k(x,Z)\leq d_{k+1}(x,Z)$ for $k = 1,2,\cdots, M-1$.
Then $|\omega_M(x)|=\prod_{k=1}^M  d_k(x,Z)$ and, given $p<M$, the $p-$th derivative of $\omega_M$ at the point $x$ is the sum of $\frac{M!}{(M-p)!}$ products, where each product contains $M-p$ terms of the type $(x-z_k)$. Hence
\begin{equation}\label{om}
|\omega_M^{(p)}(x)|\leq M^p\,\prod_{k=p+1}^M d_k(x,Z).
\end{equation}

Suppose we are given a finite set $Z=(z_k)_{k=1}^{M}\subset K^\alpha$. Let $m_{j,s}(Z):=\#(Z\cap I_{j,s}).$ We say that points of $Z$ are {\it uniformly distributed} on $K^\alpha$ if for each $k\in \Bbb N$ and $i,j \in \{1,2, \ldots, 2^k\}$ we have
\begin{equation}\label{un}
|m_{i,k}(Z)- m_{j,k}(Z)|\leq 1.
\end{equation}
As in \cite{GU}, see also \cite{QE}, we put all points from $\cup_{k=0}^{\infty} X_k$  in order by {\it the rule of increase of type}.
First, we enumerate points from
$Y_0=X_0: x_1=0, x_2 =1.$ After this we include points from $X_1$ by increase the index of each point by 2: $x_3= \ell_1, x_4 = 1-\ell_1.$
Thus, $Y_1$  in ascending order is $\{x_1,x_3,x_4,x_2\}.$ Increasing the index of each point by 4 gives the order $X_2=\{x_5,x_7,x_8,x_6\}$
with $Y_2=\{x_1,x_5, x_7, x_3,x_4, x_8, x_6, x_2\}.$
Continuing in this fashion, we use $Y_{k-1}=\{x_{i_1},x_{i_2},\cdots, x_{i_{2^k}}\}$ to define the ordering $X_k=\{x_{i_1+2^k},x_{i_2+2^k},\cdots, x_{i_{2^k}+2^k}\}.$ We see that the points with odd indices are on the left part of $K^\alpha$, whereas $x_{2n}\in I_{2,1}.$ For each $M$,
the first $M$ points chosen by the above rule are uniformly distributed on $K^\alpha$.\\

Let us consider the location of these points in more detail. Suppose $k\in \Bbb N$ and $1\leq j \leq 2^k.$ Then  $(x_i)_{i=1}^{2^k}$ is $Y_{k-1}$ that
is the set of all endpoints of types $\leq k-1$, whereas $x_{2^k+j}=x_j\pm\ell_k,$ where the sign is determined as follows: if $2^k+j=2^k+2^m+\cdots +2^r+1$
with $\kappa:=\#\{k, m, \ldots, r\}$ then the sign is $(-1)^{\kappa-1}.$

In what follows, we will interpolate polynomials at points of $Z=(x_k)_{k=1}^{N+1}$: if $Q\in {\mathcal P}_{N}$ then
\begin{equation}\label{int}
Q(x)=\sum_{k=1}^{N+1} Q(x_k) \frac{a_k(x)}{a_k(x_k)},
\end{equation}
where $a_k(x)=\prod_{i=1, i\neq k}^{N+1}(x-x_i).$ Let us fix $N$ with the binary decomposition for $N+1$
\begin{equation}\label{NN}
N+1=2^s+ 2^{s_1}+2^{s_2}+\cdots +2^{s_m} \,\,\,\,\mathrm{with} \,\,\,\,0\leq s_m<\cdots < s_1 < s_0:=s.
\end{equation}
By the above,
\begin{equation}\label{N+2}
x_{N+2}=\ell_{s_m}-\ell_{s_{m-1}}+\ell_{s_{m-2}}-\cdots+(-1)^m \ell_{s_0}.
\end{equation}

The representation \eqref{NN} gives the decomposition
\begin{equation}\label{ZZ}
Z=A_s\cup A_{s_1} \cup \cdots \cup A_{s_m}
\end{equation}
with $\#(A_{s_j})=2^{s_j}$ for $0\leq j\leq m.$ Here, $A_s=(x_k)_{k=1}^{2^s}=(x_{k,s})_{k=1}^{2^s},$ with $x_{1,s}=0<x_{2,s}=\ell_{s-1}<\cdots <x_{2^s,s}=1.$ Similarly,
$ A_{s_1}=(x_k)_{k=2^s+1}^{2^s+2^{s_1}}=(x_{k,s_1})_{k=1}^{2^{s_1}} ,\cdots, A_{s_m}=(x_k)_{k=N-2^{s_m}+2}^{N+1}=(x_{k,s_m})_{k=1}^{2^{s_m}}$ with $x_{k,s_j}\nearrow$ as $k$ increases and $j$ is fixed.

Our next objective is to determine the location of points from $A_{s_j}$.
Let $I_{i,k}=[a_{i,k},b_{i,k}]$ for $1\leq i\leq 2^k.$ Then $A_s=Y_{s-1}=\bigcup_{i=1}^{2^{s-1}}\{a_{i,s-1},b_{i,s-1}\}.$

Each $I_{i,s_1-1}$ contains two points of $A_{s_1}$ that are symmetric with respect to the endpoints of $I_{i,s_1-1}$. For example, $x_{1,s_1}=x_{2^s+1}=\ell_s$ is symmetric to $x_{2,s_1}=\ell_{s_1-1}-\ell_s,$ which is $x_{2^s+2^{s_1-1}+1}.$ The largest point of $A_{s_1}$, namely $x_{2^{s_1},s_1}=1-\ell_s$ is $x_k$ with $k=2^s+2=2^s+2^0+1.$ Thus, $A_{s_1}=\bigcup_{i=1}^{2^{s_1}-1}\{a_{i,s_1-1}+\ell_s,b_{i,s_1-1}-\ell_s\}.$
Likewise, $x_{1,s_2}=x_{2^s+2^{s_1}+1}=\ell_{s_1}-\ell_s$
is symmetric in regard to $I_{1,s_2-1}$ with  $x_{2,s_2}=\ell_{s_2-1}-\ell_{s_1}+\ell_s=x_{2^s+2^{s_1}+2^{s_2-1}+1},$ so
$A_{s_2}=\bigcup_{i=1}^{2^{s_2-1}}\{a_{i,s_2-1}+\ell_{s_1}-\ell_s,b_{i,s_2-1}-\ell_{s_1}+\ell_s\}.$ Continuing in this fashion, we get
$A_{s_m}=\bigcup_{i=1}^{2^{s_m}-1}\{a_{i,s_m-1}+\ell_{s_{m-1}}-\ell_{s_{m-2}}+\cdots -(-1)^{m} \ell_s,b_{i,s_m-1}- \ell_{s_{m-1}}+\ell_{s_{m-2}}+\cdots +(-1)^{m} \ell_s\}.$

Each $x\in K^{\alpha}$ determines the increasing chain of basic intervals:
\begin{equation}\label{x}
x\in I_{j,s} \subset  I_{j_1,s-1} \subset I_{j_2,s-2} \subset \cdots \subset I_{j_s,0}=[0,1].
\end{equation}
Let $J_{s}$ and $I_{j,s}$ be the  adjacent subintervals of $I_{j_1,s-1}$ and, more generally, for $1\leq n \leq s-1$, let
$J_n:=(I_{j_{s-n+1},n-1}\setminus I_{j_{s-n},n})\cap E_n$. Obviously, $\#(A_s\cap I_{j,s})=\#(A_s\cap J_s)=1,\ldots, \#(A_s\cap J_n)=2^{s-n},$
so $\prod_{x_k\in A_s}|x-x_k|\leq \pi_0:=\ell_{s}\,\ell_{s-1}\,\ell_{s-2}^2 \cdots \ell_0^{2^{s-1}}.$

The same reasoning applies to  $1\leq j\leq m.$ Let $\pi_j:=\ell_{s_j}\,\ell_{s_j-1}\,\ell_{s_j-2}^2 \cdots \ell_0^{2^{s_j-1}}.$ Then
$|\omega_{N+1}(x)|=\prod_{0\leq j\leq m} \prod_{x_k\in A_{s_j}}|x-x_k| \leq \prod_{j=0}^m\pi_j.$

We shall use two more representations of the product above
\begin{equation}\label{pi}
|\omega_{N+1}(x)|\leq \prod_{j=0}^m\pi_j=\prod_{i=0}^s  \ell_i^{\lambda_i}=\prod_{k=1}^{N+1}\rho_k,
\end{equation}
where $(\rho_k)_{k=1}^{N+1}$ are all terms $\ell_i$ of the product arranged in nondecreasing order. We see that the real multiplication in
$\prod_{i=0}^s  \ell_i^{\lambda_i}$ starts at $i=1$ because $\ell_0=1,$ so at least half of the terms $\rho_k$ are 1.

The inequality \eqref{pi} is exact in powers in the following sense
$$|\omega_{N+1}(x_{N+2})|\geq \prod_{i=0}^s  h_i^{\lambda_i}.$$
Indeed,  $\prod_{x_k\in A_s}|x_{N+2}-x_k|\geq h_{s}\,h_{s-1}\,h_{s-2}^2 \cdots h_0^{2^{s-1}}.$ Taking into account \eqref{N+2} and the location of points in
$A_{s_j},$ we have a similar estimate of $\prod_{x_k\in A_{s_j}}|x_{N+2}-x_k|$ for  $1\leq j\leq m.$
By Lemma 2.1 in \cite{QE}, the ratio $h_n/\ell_n$ increases with $n$. Hence, $h_n \geq \ell_n \cdot h_0.$ Therefore,
\begin{equation}\label{pi1}
h_0^{N+1} \prod_{i=0}^s  \ell_i^{\lambda_i}\leq |\omega_{N+1}(x_{N+2})|\leq \prod_{i=0}^s  \ell_i^{\lambda_i}.
\end{equation}

As above, let $Z=(x_k)_{k=1}^{N+1}$. Fix $k$ with $1\leq k \leq N+1$ and $I_{j,s}$ containing $x_k.$ Our next goal is to analyze the function $a_k.$
The chain \eqref{x} for $x_k$ determines intervals $(J_n)_{n=1}^s$ and degrees $\mu_n:=\#(J_{n+1} \cap Z\setminus\{x_k\})$ for
$0\leq n\leq s-1, \mu_s:=\#(I_{j,s} \cap Z\setminus\{x_k\}).$ Clearly, $\mu_s=m_{j,s}-1$ and  $\sum_{n=0}^s \mu_n=N.$
 Let us consider the interval $I_{j,s}$. If $m_{j,s}=1,$ then $\mu_s=0.$
Otherwise, $m_{j,s}=2.$ Here, by the choice procedure, $d_1(x_k,Z\setminus\{x_k\})=\ell_{s}.$
In both cases this gives the factor $\ell_{s}^{\mu_s}$ in $|a_k(x_k)|$ corresponding to $I_{j,s}$.
If $x_i\in J_{n+1}$ then $h_n\leq |x_k-x_i|\leq \ell_n.$ It follows that
\begin{equation}\label{pi2}
h_0^{N} \prod_{i=0}^s  \ell_i^{\mu_i}\leq |a_k(x_k)|\leq \prod_{i=0}^s  \ell_i^{\mu_i}.
\end{equation}

Suppose $x\in I_{j,s}$, so $x_k$ and $x$ have the same degrees $(\mu_i)_{i=0}^s$. As above, $|a_k(x)|\leq \prod_{i=0}^s\ell_i^{\mu_i}.$
Thus, if $x$ and $x_k$ are on the same interval of the $s-$th level, then $|a_k(x)|\leq  h_0^{-N} |a_k(x_k)|.$

 Let us show that a similar upper bound holds for each $x\in K^\alpha$.

\begin{lemma}  \label{max}
Given $N\in {\Bbb N},$ let $Z=(x_k)_{k=1}^{N+1}$ be chosen in $K^\alpha$ as above, $1\leq k\leq N+1$. Then
$$\max_{x\in K^\alpha} |a_k(x)|\leq h_0^{-N} |a_k(x_k)|.$$
\end{lemma}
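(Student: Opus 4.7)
The paper has already proved the lemma in the case where $x$ and $x_k$ lie in the same level-$s$ basic interval: there, $x$ and $x_k$ share the degrees $(\mu_i^{(k)})$, giving $|a_k(x)| \leq \prod_i \ell_i^{\mu_i^{(k)}}$, and \eqref{pi2} delivers the matching lower bound on $|a_k(x_k)|$. My plan for the general case is a factor-by-factor comparison of $|a_k(x)|$ and $|a_k(x_k)|$.

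Suppose $x \in K^\alpha$ lies outside the level-$s$ interval containing $x_k$, and let $n^* \in \{0,\ldots,s-1\}$ be the largest level at which the chains of $x$ and $x_k$ agree. Then $|x-x_k| \geq h_{n^*}$. For each $x_i \in Z\setminus\{x_k\}$, let $\phi(i)$ (resp.\ $\psi(i)$) denote the agreement level of $x_i$ with $x$ (resp.\ with $x_k$). The crude bounds $|x-x_i| \leq \ell_{\phi(i)}$ and $|x_k-x_i| \geq h_{\psi(i)} \geq h_0\,\ell_{\psi(i)}$ (the second using the estimate $h_n \geq h_0\,\ell_n$ from Lemma~2.1 of \cite{QE}) combine to give
\begin{equation*}
\frac{|a_k(x)|}{|a_k(x_k)|} = \prod_{i\neq k}\frac{|x-x_i|}{|x_k-x_i|} \leq h_0^{-N}\,\prod_{i\neq k}\frac{\ell_{\phi(i)}}{\ell_{\psi(i)}},
\end{equation*}
and it suffices to show that the remaining product is at most $1$.

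The $x_i$ outside the shared level-$n^*$ interval of $x$ and $x_k$ satisfy $\phi(i)=\psi(i)$ and contribute a factor of $1$. The remaining $x_i$ split into two classes, one in each of the two siblings at level $n^*+1$; individual ratios in these classes can exceed $1$, but by the uniform distribution property \eqref{un} together with the rule of increase of type, the internal-level profiles of $Z$ in the two siblings are mirror images of each other up to at most one point, so the excess from one class is cancelled by the deficit from the other. The main obstacle is making this matching precise: one must track, via the binary decomposition \eqref{NN} of $N+1$ and the explicit form of the $A_{s_j}$, how each batch of interpolation nodes populates the two siblings at level $n^*+1$ and all their deeper subdivisions. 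The mirror symmetry of $K^\alpha$ about the center of the shared level-$n^*$ interval yields an exact balance at completed batches, while the at-most-one-point imbalance permitted by \eqref{un} introduces only a bounded residual, absorbed into the $h_0^{-N}$ factor.
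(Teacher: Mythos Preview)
Your reduction is correct and matches the paper's: writing the ratio factor by factor and bounding $|x-x_i|\le\ell_{\phi(i)}$, $|x_k-x_i|\ge h_{\psi(i)}\ge h_0\,\ell_{\psi(i)}$ leads exactly to the inequality $\prod_j\ell_j^{\nu_j}\le\prod_j\ell_j^{\mu_j}$ (this is the paper's \eqref{l1}, with $\nu_j=\#\{i:\phi(i)=j\}$ and $\mu_j=\#\{i:\psi(i)=j\}$). The gap is in how you argue \eqref{l1}.

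The mirror-symmetry idea does not close the argument. Even when $Z\cap I_L$ and $Z\cap I_R$ are exact mirror images, the reflection sends the level profile of $Z$ around $x_k$ to the level profile of $Z$ around $\sigma(x_k)$, not around $x$; and $x$ is an arbitrary point of $K^\alpha\cap I_R$, unrelated to $\sigma(x_k)$. So symmetry of the node set does not by itself force $\prod_{x_i\in I_R}\ell_{\phi(i)}$ to be small enough, and you still owe a comparison that is uniform over the position of $x$ within $I_R$. Your final sentence also overreaches: the factor $h_0^{-N}$ is already completely spent in the passage $h_{\psi(i)}\ge h_0\,\ell_{\psi(i)}$ over all $N$ indices, so there is nothing left to absorb any residual; and a single misplaced point at a deep level can cost a factor $\ell_{n^*}/\ell_s$, which is not bounded.

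The paper's mechanism is different and does not rely on symmetry of $Z$. For each level $i>n^*$, the level-$i$ interval containing $x_k$ and the one containing $x$ are distinct; by \eqref{un} their $Z$-counts differ by at most one, while removing $x_k$ subtracts one from the first count only. Hence $\sum_{j\ge i}\mu_j=m_{I_{j_{s-i},i}}(Z)-1\le m_{\tilde I_{j_{s-i},i}}(Z)=\sum_{j\ge i}\nu_j$ for every $i\ge n^*+1$ (and equality for $i\le n^*$). This partial-sum domination is the paper's \eqref{mu-nu}; an Abel-summation rewriting of $\sum_j\mu_j\alpha^{j-1}$ then gives \eqref{l1} with no residual at all. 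The point is not a matching of nodes but the single observation that deleting $x_k$ exactly offsets the $+1$ allowed by uniform distribution, at every level simultaneously.
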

\begin{proof}
Fix $k$ and $\tilde x$ such that $\max_{x\in K^\alpha} |a_k(x)|=|a_k(\tilde x)|.$ For  $x_k$ we take the chain \eqref{x} and degrees $(\mu_i)_{i=0}^s$
as above. On the other hand, the point $\tilde x$ determines the chain
$\tilde x\in \tilde I_{j,s} \subset  \tilde I_{j_1,s-1} \subset \cdots \subset \tilde I_{j_s,0}=[0,1]$ with the corresponding
$(\tilde J_n)_{n=1}^s$ and $\nu_n:=\#(\tilde J_{n+1} \cap Z\setminus\{x_k\})$ for $0\leq n\leq s-1, \nu_s(x):=\#(\tilde  I_{j,s} \cap Z\setminus\{x_k\}).$
We aim to show that for $1\leq i\leq s$
\begin{equation}\label{mu-nu}
\mu_s+\mu_{s-1}+\cdots +\mu_i\leq \nu_s+\nu_{s-1}+\cdots +\nu_i.
\end{equation}
Let $n$ be the largest level for each $I_{j_{s-n},n}=\tilde I_{j_{s-n},n}.$ Then the intervals $J_i$ and $\tilde J_i$ coincide for $0\leq i\leq n$
and $\mu_i=\nu_i$ for $0\leq i\leq n-1.$ The interval $I_{j_{s-n},n}$ contains $N-\sum_{q=0}^{n-1}\mu_q$ which is $\sum_{q=n}^{s}\mu_q.$ The same is valid
for $\nu_q$. Hence, \eqref{mu-nu} is valid for $i=n$ and we need only prove it for $i\geq n+1.$
We note that $x_k$ and $\tilde x$ are on different subintervals of the $n+1-$st level of $I_{j_{s-n},n}$, namely, $x_k\in \tilde J_{n+1}, \tilde x\in J_{n+1}.$

Let us show \eqref{mu-nu} for $i=s.$ Since the set $Z$ is distributed uniformly, each interval of $s-$th level may contain not less than one and not more that two points of $Z$. Thus, $1\leq \nu_s$ and $\mu_s\leq 1,$ as the point $x_k$ is excluded. Similarly, by \eqref{un},
$$\mu_s+\mu_{s-1}=m_{j_1,s-1}-1\leq \#(\tilde I_{j_1,s-1}\cap Z)=\#(\tilde I_{j_1,s-1}\cap Z\setminus\{x_k\})=\nu_s+\nu_{s-1},$$
 since $x_k\notin \tilde I_{j_1,s-1}.$ We can repeat the argument for other $i\geq n+1,$ since for such values
$i$ the interval $ \tilde I_{j_{s-i},i}$ does not contain $x_k$. Thus, \eqref{mu-nu} is valid for all $i$.

We proceed to show that the desired assertion follows from \eqref{mu-nu}. On the one hand,
$\max_{x\in K^\alpha} |a_k(x)|\leq \prod_{j=0}^s  \ell_j^{\nu_j}=\ell_1^{\nu_1+\alpha \nu_2+\cdots+ \alpha^{s-1} \nu_s}.$
On the other hand, $ |a_k(x_k)|\geq \prod_{j=0}^s  h_j^{\mu_j}=\prod_{j=0}^s  \ell_j^{\mu_j} \cdot \prod_{j=0}^s  (h_j/\ell_j)^{\mu_j},$ where, as above,
$h_j/\ell_j\geq h_0.$ Hence, $ |a_k(x_k)|\geq h_0^N \prod_{j=0}^s  \ell_j^{\mu_j}$ and it remains to prove that
\begin{equation}\label{l1}
\prod_{j=0}^s \ell_j^{\nu_j} \leq \prod_{j=0}^s  \ell_j^{\mu_j},
\end{equation}
or, what is equivalent, that
$$\mu_1+\alpha \mu_2+\cdots+ \alpha^{s-1} \mu_s\leq \nu_1+\alpha \nu_2+\cdots+ \alpha^{s-1} \nu_s.$$
The left side can be written as $(\mu_s+\mu_{s-1}+\cdots +\mu_1)+(\alpha-1)(\mu_s+\mu_{s-1}+\cdots +\mu_2)+\cdots +(\alpha^{s-1}-\alpha^{s-2})\mu_s.$
Similar representation for the right side and \eqref{mu-nu} completes the proof.
\end{proof}

One may conjecture that the coefficient $h_0^{-N}$ in the above lemma can be reduced. However, it cannot be replaced by a factor that increases
polynomially with $N$. Let's show this.

\begin{example}  \label{ex}
Let $N=2^s+2, \alpha=2, \ell_1\leq1/4$ and a constant $r$ be fixed. Then for $Z=(x_k)_{k=1}^{N+1}, k=N+1, y=\ell_2-\ell_s$ we have
$N^r\,|a_k(x_k)|<|a_k(y)|$ for large $s$.

Here, $x_k=\ell_1-\ell_s, a_k(x)= \prod_{j=1}^{N}(x-x_j).$ We proceed to show that $\frac{|a_k(x_k)|}{|a_k(y)|}$ is exponentially small (with respect to $N$)
for large enough $s$. If the set $(x_j)_{j=1}^{2^s+2}$ is decomposed in the form $Y_{s-1}\cup A$ with  $A=(x_j)_{j=2^s+1}^{2^s+2}=\{\ell_s, 1-\ell_s\},$ then
$|a_k(x)|=\prod_{x_j\in Y_{s-1}}|x-x_j|\prod_{x_j\in A}|x-x_j|.$ For the second part, we have
$\prod_{x_j\in A}\frac{|x_k-x_j|}{|y-x_j|}=\frac{(\ell_1-2\ell_s)(1-\ell_1)}{(\ell_2-2\ell_s)(1-\ell_2)}<M:=\frac{2}{\ell_1(1-\ell_2)}$ that does not depend on $s$. It remains to estimate $\prod_{x_j\in Y_{s-1}}\frac{|x_k-x_j|}{|y-x_j|}=\prod_{j=1}^{2^s}\frac{d_j(x_k)}{d_j(y)},$ where for brevity, we drop the argument
$Y_{s-1}$ in $d_j(x,Y_{s-1}).$ By symmetry, $d_j(x_k)=d_j(y)$ for $1\leq j\leq 2^{s-2},$ that is for the points $x_i$ on the nearest (to the argument of $d_j$)
interval of the second level. Let us take $\tilde y:=\ell_1-\ell_2+\ell_s,$ which is symmetric to $y$ with respect to $I_{1,1}.$ Then $d_j( \tilde y)=d_j(y)$ for
$2^{s-2}+1\leq j\leq 2^{s-1}.$ Since $x_k-\tilde y=\ell_2-2\ell_s,$ we get
$\prod_{j=2^{s-2}+1}^{2^{s-1}}\frac{d_j(x_k)}{d_j(y)}=\prod_{j=2^{s-2}+1}^{2^{s-1}}(1+\frac{\ell_2-2\ell_s}{d_j(y)})=
\prod_{i=1}^{2^{s-2}}(1+\frac{\ell_2-2\ell_s}{D_i}).$ Here and for the remaining two intervals of the 2-nd level we express the corresponding products
in terms of $D_i:=d_{2^{s-2}+i}$ with $1\leq i\leq 2^{s-2}.$ For these intervals $d_j(x_k)=d_j(y)-\ell_1+\ell_2.$

If $x_i\in I_{3,2}$ then $d_{2^{s-1}+i}=D_i+1-2\ell_1+\ell_s,$ so
$\prod_{j=2^{s-1}+1}^{3\cdot 2^{s-2}}\frac{d_j(x_k)}{d_j(y)}=\prod_{i=1}^{2^{s-2}}(1-\frac{\ell_1-\ell_2}{D_i+1-2\ell_1+\ell_2}).$

If $x_i\in I_{4,2}$ then $d_{3\cdot 2^{s-2}+i}=D_i+1-\ell_1$ and
$\prod_{j=3\cdot 2^{s-2}+1}^{2^s}\frac{d_j(x_k)}{d_j(y)}=\prod_{i=1}^{2^{s-2}}(1-\frac{\ell_1-\ell_2}{D_i+1-\ell_1}).$

 Therefore, $\prod_{j=1}^{2^s}\frac{d_j(x_k)}{d_j(y)}=\prod_{i=1}^{2^{s-2}}(1+\frac{\ell_2-2\ell_s}{D_i})(1-\frac{\ell_1-\ell_2}{D_i+1-2\ell_1+\ell_2})
(1-\frac{\ell_1-\ell_2}{D_i+1-\ell_1})$ with admissible values $\ell_1-2\ell_2+\ell_s\leq D_i\leq \ell_1-\ell_2+\ell_s.$
The general term of the product (we denote it briefly by $b_i$) consists of three parts. It can be increased only if $D_i$ is replaced by $\ell_1-2\ell_2$
in the first part and by the maximum $D_i$ in the 2nd and 3rd parts. Hence,
$b_i\leq \frac{\ell_1-\ell_2}{\ell_1-2\ell_2}\frac{1-2\ell_1+\ell_2+\ell_s}{1-\ell_1+\ell_s}\frac{1-\ell_2+\ell_s}{1+\ell_1-2\ell_2+\ell_s}.$
For large $s$, the right side of the expression is as close to
$\sigma:=\frac{1-\ell_1}{1-2\ell_1}\frac{1-2\ell_1+\ell_2}{1-\ell_1}\frac{1-\ell_2}{1+\ell_1-2\ell_2}$ as we want it to be.
A straightforward computation shows that $\sigma<1.$ Let  $\sigma<\sigma_1<1.$ Then $\frac{|a_k(x_k)|}{|a_k(y)|}< M \,\sigma_1^{2^{s-2}},$
which is the desired conclusion.
\end{example}

\begin{remark}
Here, $\ell_1$ may be arbitrary small.
\end{remark}
\begin{remark}
The example shows that the sequence $(x_n)_{n=1}^{\infty}$ is not Leja. Recall that a sequence $(y_n)_{n=1}^{\infty}\subset K$ has Leja's property if $|y_1-y_2|=\mathrm{diam}(K)$ and, once $y_1, y_2, \dots, y_n$ have been determined, $y_{n+1}$ is chosen so that it provides the maximum modulus of the polynomial $(x-y_1)\cdots(x-y_n)$ on $K$. In our case,  $|\omega_{2^s+2}(x_{2^s+3})|<|\omega_{2^s+2}(y)|.$ However, using the example technique, it can be shown
that $(x_n)_{n=1}^{\infty}$ is a Leja sequence for $K^\alpha$ if $\alpha>2.$
\end{remark}
\begin{remark}
Similarly, the set $(x_n)_{n=1}^M$ is not a Fekete $M-$tuple for $M=2^s+3$. Indeed, let $V(t_1,\cdots, t_M)$ be the Vandermonde determinant. In our case,
$\frac{|V(x_1,\cdots, x_M)|}{|V(x_1,\cdots, x_{M-1},y)|}=\frac{|\omega_{2^s+2}(x_M)|}{|\omega_{2^s+2}(y)|}<1,$ whereas a Fekete $M-$tuple must
realize the maximum modulus of the Vandermonde determinant.
\end{remark}

\section{Markov $M_N^{(p)}$ factors}

As a first application of Lemma \ref{max}, we can estimate the Lebesgue constants for Newton's interpolation at points $(x_n)_{n=1}^{\infty}$.

\begin{proposition}  \label{Leb}
Given $N\in {\Bbb N},$ let $(x_k)_{k=1}^{N+1}$ be chosen by the rule of increase of type and $\Lambda_{N+1}(K^\alpha)$ be the corresponding
Lebesgue constant. Then
$$\Lambda_{N+1}(K^\alpha) \leq h_0^{-N}\cdot (N+1).$$
\end{proposition}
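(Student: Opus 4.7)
The plan is to read off the estimate directly from Lemma \ref{max}. Recall that the Lebesgue constant associated with the nodes $(x_k)_{k=1}^{N+1}$ is
$$\Lambda_{N+1}(K^\alpha)=\sup_{x\in K^\alpha}\sum_{k=1}^{N+1}|l_{k,N+1}(x)|,\qquad l_{k,N+1}(x)=\frac{a_k(x)}{a_k(x_k)},$$
with $a_k$ the polynomial introduced in \eqref{int}. So the task reduces to bounding each fundamental Lagrange polynomial $l_{k,N+1}$ uniformly on $K^\alpha$ and summing.

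First I would fix an index $k\in\{1,2,\ldots,N+1\}$ and apply Lemma \ref{max} to the numerator of $l_{k,N+1}$: the lemma gives $\max_{x\in K^\alpha}|a_k(x)|\le h_0^{-N}|a_k(x_k)|$, hence $\max_{x\in K^\alpha}|l_{k,N+1}(x)|\le h_0^{-N}$. Then I would sum over $k$ from $1$ to $N+1$ and take the supremum in $x$; since the bound $h_0^{-N}$ is independent of $k$, one obtains
$$\Lambda_{N+1}(K^\alpha)\le (N+1)\,h_0^{-N},$$
which is the desired inequality.

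There is no real obstacle here: once Lemma \ref{max} is in hand, the proposition is a one-line consequence of the definitions, with the factor $N+1$ coming from the number of nodes and the factor $h_0^{-N}$ coming from the lemma. The substantive content of the bound is entirely contained in Lemma \ref{max}.
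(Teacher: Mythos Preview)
Your proof is correct and is exactly the argument the paper has in mind: the paper's own proof reads ``It is evident in view of \eqref{int} and Lemma \ref{max},'' which is precisely the computation you spell out. The factor $h_0^{-N}$ comes from Lemma \ref{max} and the $N+1$ from summing over the nodes.
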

\begin{proof} It is evident in view of \eqref{int} and Lemma \ref{max}.
\end{proof}

From now on, for any product $\prod_{k=1}^M t_k$ with $t_k\geq 0$ and $p<M,$ we will use the symbol $ _p(\prod_{k=1}^M t_k)$ to denote the product of $p$ smallest terms $t_k$. Also, let $(\prod_{k=1}^M t_k)_p$ be the original product without $p$ smallest terms, so
$\prod_{k=1}^M t_k= _p(\prod_{k=1}^M t_k)\cdot (\prod_{k=1}^M t_k)_p.$

Of course, in general, $\prod_{k=1}^M \tau_k \leq \prod_{k=1}^M t_k$ does not imply

\begin{equation}\label{tau}
\left(\prod_{k=1}^M\tau_k \right)_p \leq \left(\prod_{k=1}^M t_k\right)_p.
\end{equation}
But \eqref{tau} is trivially valid provided additional condition:
$0\leq \tau_k\leq t_k$ for all $k$.\\

Let us show that in \eqref{mu-nu} and \eqref{l1} $\nu_j$ can be replaced by $\lambda_j$.
The argument of Lemma \ref{max} can be applied to each point $x\in K^\alpha$ instead of $\tilde x.$ Let us apply it
to $x_{N+2},$ for which we have some degrees $\overline{\nu}_j$ and  \eqref{mu-nu} with $\overline{\nu}_j$ instead of $\nu_j.$
If we add the point $x_k$ to $Z\setminus\{x_k\}$ then one of $\overline{\nu}_j$ will increase by one and new powers become $\lambda_j$. Thus,
\begin{equation}\label{la-mu}
\mu_s+\mu_{s-1}+\cdots +\mu_i\leq \lambda_s+\lambda_{s-1}+\cdots +\lambda_i
\end{equation}
for $1\leq i\leq s.$ By this, as above, we have $\prod_{j=0}^s \ell_j^{\lambda_j} \leq \prod_{j=0}^s  \ell_j^{\mu_j}.$  Note that the values of
$\mu_j, \overline{\nu}_j$ depend on $k$, while $\lambda_j$ is only defined by $N$. The left product has $N+1$ terms, and the right product has $N$.
Of course, we can start multiplication starting from $j=1$. By \eqref{la-mu}, it can be started from each $i$. In fact, we can make a more general estimate.
Let $p\leq N.$ Then
\begin{equation}\label{p-l}
_p\left(\prod_{j=0}^s \ell_j^{\lambda_j}\right) \,\leq\,\,\,  _p\left(\prod_{j=0}^s \ell_j^{\mu_j}\right).
\end{equation}

The proof is by induction on $p$. Let $p=1.$ The smallest term of the $\lambda-$product is $\ell_s$ as for each $N$ with \eqref{NN}, the value of
$\lambda_s$ is equal to one. On the other hand, $\mu_s=m_{j,s}-1$ with $m_{j,s}\in\{1,2\}$ as was discussed before Lemma \ref{max}.
Hence, the smallest term of the $\mu-$product is $\ell_s$ or $\ell_{s-1}$ and \eqref{p-l} is valid. Suppose it is true for some $p\geq 1.$
Let $p=\lambda_s+\cdots +\lambda_i+\tau$ with $0\leq \tau<\lambda_{i-1}.$ Then
$_{(p+1)}\left(\prod_{j=0}^s \ell_j^{\lambda_j}\right)=_p\left(\prod_{j=0}^s \ell_j^{\lambda_j}\right)\cdot \ell_{i-1}.$
In its turn, $_{(p+1)}\left(\prod_{j=0}^s \ell_j^{\mu_j}\right)=$ $_p\left(\prod_{j=0}^s \ell_j^{\mu_j}\right)\cdot t.$ By \eqref{la-mu},
$\mu_s+\mu_{s-1}+\cdots +\mu_i\leq p.$ Consequently, the product $_p\left(\prod_{j=0}^s \ell_j^{\mu_j}\right)$ consists of the corresponding powers of
$\ell_s, \ldots, \ell_i$, possibly with some number of larger terms. Therefore, for $p+1-$st, the term $t$ cannot be less than $\ell_{i-1}$
and \eqref{p-l} is valid for each $p$.\\

The $(\cdot)_p-$version of \eqref{p-l}
\begin{equation}\label{l-p}
\left(\prod_{j=0}^s \ell_j^{\lambda_j}\right)_p \,\leq\,\,\,  \left(\prod_{j=0}^s \ell_j^{\mu_j}\right)_p
\end{equation}
is also correct. Let us first show a stronger result.

\begin{lemma}\label{4.2}
Given $N\in {\Bbb N},$ let $Z=(x_k)_{k=1}^{N+1}$ be chosen in $K^\alpha$ by the rule of increase of type. Suppose $1\leq k\leq N+1$ and
$(\mu_j)_{j=0}^s$ are degrees corresponding to $x_k$. For a fixed $x\in K^{\alpha}$, let $(\nu_j)_{j=0}^s$ be defined as in Lemma \ref{max} with
$x$ instead of $\tilde x.$ Then for $ 1\leq p\leq N$ we have
\begin{equation}\label{nu-p}
\left(\prod_{j=0}^s \ell_j^{\nu_j}\right)_p \,\leq\,\left(\prod_{j=0}^s \ell_j^{\mu_j}\right)_p.
\end{equation}
\end{lemma}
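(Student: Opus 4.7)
The plan is to reformulate the $(\cdot)_p$ operation in terms of a truncated exponent sequence and then apply summation by parts, leveraging the cumulative-sum inequality \eqref{mu-nu}. Since the argument used in Lemma \ref{max} to establish \eqref{mu-nu} applies to any point of $K^\alpha$ and not only to the extremizer $\tilde x$, I will first record that \eqref{mu-nu} holds with $\nu_j$ defined from the fixed point $x$, together with the balance $\sum_{j=0}^s \nu_j = \sum_{j=0}^s \mu_j = N$.

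Next, I will unpack $(\cdot)_p$. Writing $p = \nu_s + \nu_{s-1} + \cdots + \nu_{i+1} + \tau$ with $0 \leq \tau < \nu_i$, the product $\bigl(\prod_{j=0}^s \ell_j^{\nu_j}\bigr)_p$ equals $\prod_{j=0}^s \ell_j^{\tilde\nu_j}$ where
\[
\tilde\nu_j = \begin{cases} \nu_j & \text{if } j < i,\\ \nu_i - \tau & \text{if } j = i,\\ 0 & \text{if } j > i,\end{cases}
\]
and $\tilde\mu_j$ is defined analogously. A short direct computation (splitting into the three cases $k>i$, $k=i$, $k<i$) then yields the key identity
\[
\sum_{j \geq k} \tilde\nu_j \,=\, \Bigl(\sum_{j \geq k} \nu_j - p\Bigr)^{\!+} \qquad (0 \leq k \leq s),
\]
and analogously for $\tilde\mu_j$, so in particular $\sum_j \tilde\nu_j = \sum_j \tilde\mu_j = N - p$.

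Combining this identity with the hypothesis $\sum_{j \geq k} \mu_j \leq \sum_{j \geq k} \nu_j$ from \eqref{mu-nu} immediately gives $\sum_{j \geq k} \tilde\mu_j \leq \sum_{j \geq k} \tilde\nu_j$ for every $k$, while both distributions $\tilde\mu$ and $\tilde\nu$ have the same total mass $N-p$. Since $1=\ell_0\geq \ell_1 \geq \cdots \geq \ell_s$, the sequence $\log(1/\ell_j)$ is nondecreasing in $j$, so Abel summation by parts transforms this cumulative-sum inequality into
\[
\sum_{j=0}^s \tilde\nu_j \log(1/\ell_j) \,\geq\, \sum_{j=0}^s \tilde\mu_j \log(1/\ell_j),
\]
which is exactly \eqref{nu-p}.

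I expect the main obstacle to be the bookkeeping required to verify the identity $\sum_{j \geq k}\tilde\nu_j = (\sum_{j \geq k}\nu_j - p)^{+}$ in all edge cases, in particular when some intermediate $\nu_j$ (or $\mu_j$) vanish and the defining index $i$ is forced to shift, and the boundary case $p=N$ where both products collapse to the empty product $1$. The remaining manipulations — the translation between \eqref{mu-nu} and the tail-sum version of the hypothesis, and the one line of Abel summation — are routine once the identity is in hand.
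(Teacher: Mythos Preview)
Your argument is correct and in fact cleaner than the paper's. The paper proceeds by decreasing induction on $p$: it first locates the largest level $n$ for which the basic intervals of $x_k$ and $x$ coincide, observes that \eqref{nu-p} is an equality for $p\ge\sigma_n(\mu)$, and then steps $p$ down through the ranges $[\sigma_{q+1}(\mu),\sigma_q(\mu))$, comparing at each step the additional factor $\ell_q^{\mu_q-\tau}$ on the $\mu$-side with the corresponding $\mu_q-\tau$ factors on the $\nu$-side and invoking \eqref{mu-nu} to see that the latter are all $\le\ell_q$.

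Your route replaces this induction by a single structural observation: writing the truncated product as $\prod_j\ell_j^{\tilde\nu_j}$, the tail sums satisfy $\sum_{j\ge k}\tilde\nu_j=(\sum_{j\ge k}\nu_j-p)^{+}$, and since $t\mapsto(t-p)^{+}$ is nondecreasing, the tail-sum dominance \eqref{mu-nu} passes directly to the truncated exponents. Abel summation against the nondecreasing weights $\log(1/\ell_j)$ then finishes. This avoids any reference to the coincidence level $n$ and makes transparent that the only inputs are \eqref{mu-nu} and the monotonicity of $(\ell_j)$; the paper's proof, by contrast, keeps closer to the geometry and would be easier to follow for a reader tracking the interval structure. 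Both are short, but yours isolates the combinatorial content more sharply.
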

\begin{proof} Let $\sigma_q(\mu)=\mu_s+\mu_{s-1}+\cdots +\mu_q$ for $0\leq q\leq s$ with a similar definition of $\sigma_q(\nu).$
As in Lemma \ref{max}, $I_{j_{s-n},n}=\tilde I_{j_{s-n},n}$ is the smallest basic interval containing both points $x_k$ and $x.$
Then $\sigma_n(\mu)=\sigma_n(\nu)$ as $\mu_i=\nu_i$ for $0\leq i\leq n-1.$ It follows that \eqref{nu-p} is the equality for $\sigma_n(\mu)\leq p\leq N.$
In particular,
$\left(\prod_{j=0}^s \ell_j^{\mu_j}\right)_{\sigma_n(\mu)}= \ell_{n-1}^{\mu_{n-1}}\cdots \ell_0^{\mu_0}=\ell_{n-1}^{\nu_{n-1}}\cdots \ell_0^{\nu_0}.$
For brevity, we denote this product by $A$.

By decreasing induction on $p$, suppose that $\sigma_{n+1}(\mu)\leq p <\sigma_n(\mu)$ so $p=\sigma_{n+1}(\mu)+\tau$ with $0\leq \tau<\mu_n.$
Removing the smallest $p$ terms from $\prod_{j=0}^s \ell_j^{\mu_j}$ gives $\left(\prod_{j=0}^s \ell_j^{\mu_j}\right)_p=\ell_n^{\mu_n-\tau}\cdot A.$
It follows that to get $\left(\prod_{j=0}^s \ell_j^{\nu_j}\right)_p$
we must multiply the product $A$ by the $\mu_n-\tau$ largest terms from  $\ell_s^{\nu_s} \cdots \ell_n^{\nu_n}.$ Since these terms do not exceed $\ell_n,$
we have \eqref{nu-p} for a given $p.$

Suppose, \eqref{nu-p} is valid with the subscript $\sigma_q(\mu)$ for $ n+1<q<s.$ Let $p=\sigma_{q+1}(\mu)+\tau$ with $0\leq \tau<\mu_q.$
Then $(\prod_{j=0}^s \ell_j^{\mu_j})_p=\ell_q^{\mu_q-\tau}\cdot (\prod_{j=0}^s \ell_j^{\mu_j})_{\sigma_{q}(\mu)}.$
On the other hand, $(\prod_{j=0}^s \ell_j^{\nu_j})_p=t_1\cdots t_{\mu_q-\tau}\cdot (\prod_{j=0}^s \ell_j^{\nu_j})_{\sigma_{q}(\mu)},$
where $(\prod_{j=0}^s \ell_j^{\nu_j})_{\sigma_{q}(\mu)}\leq (\prod_{j=0}^s \ell_j^{\mu_j})_{\sigma_{q}(\mu)}$ by  the induction hypothesis and
$(t_i)_{i=1}^{\mu_q-\tau}$ are the next descending members of $\ell_s^{\nu_s} \cdots \ell_0^{\nu_0}$ after the largest $\sigma_{q}(\mu)$ terms
have been removed. By \eqref{mu-nu}, these $t_i$ are among $\ell_s^{\nu_s} \cdots \ell_q^{\nu_q},$ which completes the proof.
\end{proof}

{\bf Remark}. Multiplying the left side of \eqref{nu-p} by some additional term $\ell_j$ can only reduce it. This implies the inequality \eqref{l-p}.\\

Now the task is to find an analog of the Markov property for the sets under consideration.

Let $K\subset \Bbb R$ be a compact set  of infinite cardinality. A sequence of {\it Markov's factors} for $K$ is defined as
 $M_N(K)=\inf \{M: \,|Q'|_K \leq M \,|Q|_K, \,\,Q\in {\mathcal P}_N \}$
for  $N\in {\Bbb N}.$ Thus, there are constants $C$ and $r$ with $M_N(K)\leq C N^r$ for all $N$ under the Markov property of $K$.
We see that $M_N(K)$ is the norm of the differentiation operator $D$ in the space $({\mathcal P}_N, |\cdot|_K).$

Given $p, N\in {\Bbb N},$ we define Markov's $N-$th factor of $p-$th order as the norm of $D^p$:
$$M_N^{(p)}(K)=\inf \{M: \,|Q^{(p)}|_K \leq M \,|Q|_K, \,\,Q\in {\mathcal P}_N \}.$$

Clearly, $M_N^{(p)}(K)\leq M_N(K)^p.$ This estimate is not rough for Markov sets. For example, if $K=[-1,1]$ then, see, e.g., \cite{VL}, p. 132,
$M_N^{(p)}(K)=\frac{N^2\cdot(N^2-1) \cdots (N^2-(p-1)^2)}{1\cdot 3\cdot \cdots (2p-1)}$ with $M_N(K)=N^2.$
For the Cantor sets under consideration, the difference between $M_N^{(p)}(K)$ and $M_N(K)^p$  is essential.

In the next lemma, given $N\in {\Bbb N}$, we use \eqref{NN} and \eqref{pi} for $N+1, \rho_1\cdots \rho_p=_p\left(\prod_{j=0}^s \ell_j^{\lambda_j}\right).$

\begin{theorem}  \label{mf}
Given $N$ and $1\leq p<N,$ we have $M_N^{(p)}(K^{\alpha})\leq h_0^{-N}\frac{(N+1) N^p}{\rho_1\cdots \rho_p}.$
\end{theorem}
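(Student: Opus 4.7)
The plan is to obtain the bound from Lagrange interpolation. For $Q\in\mathcal P_N$, by \eqref{int} we have
\[
Q^{(p)}(x)=\sum_{k=1}^{N+1}Q(x_k)\,\frac{a_k^{(p)}(x)}{a_k(x_k)},
\qquad
|Q^{(p)}(x)|\le |Q|_{K^{\alpha}}\sum_{k=1}^{N+1}\frac{|a_k^{(p)}(x)|}{|a_k(x_k)|}.
\]
Since $a_k\in\mathcal P_N$, the inequality \eqref{om} applied to $a_k$ gives $|a_k^{(p)}(x)|\le N^{p}\bigl(|a_k(x)|\bigr)_{p}$, where $\bigl(|a_k(x)|\bigr)_{p}$ is the product of the $N-p$ largest distances from $x$ to the nodes $Z\setminus\{x_k\}$. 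Hence it suffices to verify, uniformly in $x\in K^{\alpha}$ and $k$, the key estimate
\[
\frac{\bigl(|a_k(x)|\bigr)_{p}}{|a_k(x_k)|}\;\le\;\frac{h_0^{-N}}{\rho_1\cdots\rho_p},
\]
since summing over $k$ then produces the factor $N+1$.

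To prove this estimate I would chain together the three monotonicity facts already at our disposal. First, for $x\in K^{\alpha}$ I associate to $x$ the chain and degrees $(\nu_j)_{j=0}^{s}$ exactly as in the proof of Lemma \ref{max}; then every distance $|x-x_i|$ with $x_i\in Z\setminus\{x_k\}$ is bounded by the length of the corresponding basic interval, so the multiset $\{|x-x_i|\}$ is dominated (term-by-term after sorting both sides in nondecreasing order) by the multiset recorded by $\ell_{s}^{\nu_s}\ell_{s-1}^{\nu_{s-1}}\cdots\ell_{0}^{\nu_0}$. Consequently
\[
\bigl(|a_k(x)|\bigr)_{p}\;\le\;\Bigl(\prod_{j=0}^{s}\ell_j^{\nu_j}\Bigr)_{p}.
\]
Next, Lemma \ref{4.2} upgrades this to $\bigl(\prod_{j=0}^{s}\ell_j^{\nu_j}\bigr)_{p}\le\bigl(\prod_{j=0}^{s}\ell_j^{\mu_j}\bigr)_{p}$ where $(\mu_j)$ are the degrees for $x_k$. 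Writing $\bigl(\prod\ell_j^{\mu_j}\bigr)_{p}=\prod\ell_j^{\mu_j}\,/\,{}_{p}\!\bigl(\prod\ell_j^{\mu_j}\bigr)$ and applying \eqref{p-l} in the denominator yields $\,_{p}\!\bigl(\prod\ell_j^{\mu_j}\bigr)\ge\,_{p}\!\bigl(\prod\ell_j^{\lambda_j}\bigr)=\rho_1\cdots\rho_p$. Finally, the lower estimate in \eqref{pi2} says $\prod\ell_j^{\mu_j}\le h_0^{-N}\,|a_k(x_k)|$, and combining these three inequalities delivers precisely the target bound.

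Putting the pieces together gives
\[
|Q^{(p)}(x)|\;\le\;|Q|_{K^{\alpha}}\sum_{k=1}^{N+1}N^{p}\cdot\frac{h_0^{-N}}{\rho_1\cdots\rho_p}\;=\;h_0^{-N}\,\frac{(N+1)N^{p}}{\rho_1\cdots\rho_p}\,|Q|_{K^{\alpha}},
\]
which is the claim. The main obstacle I expect is the first reduction step—passing from the genuine ordered distances $d_j(x,Z\setminus\{x_k\})$ to the sorted multiset $\{\ell_j^{\nu_j}\}$ while respecting the operation $(\,\cdot\,)_{p}$. It needs a careful rearrangement argument: one shows that if every distance is individually bounded by a specific $\ell_{n(i)}$, then after simultaneous nondecreasing sorting the inequality persists term by term, so that discarding the $p$ smallest on each side preserves the domination. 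All other ingredients (Lemma \ref{max}, Lemma \ref{4.2}, \eqref{p-l}, \eqref{pi2}, and \eqref{om}) are in place and can be invoked directly.
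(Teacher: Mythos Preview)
Your proposal is correct and follows essentially the same route as the paper: reduce via \eqref{int} to bounding $|a_k^{(p)}(x)|/|a_k(x_k)|$, use \eqref{om} to pass to $(\prod d_i)_p$, replace this by $(\prod\ell_j^{\nu_j})_p$, invoke Lemma~\ref{4.2} and \eqref{pi2}, and finish with \eqref{p-l}. The rearrangement step you flag as the main obstacle is exactly what the paper dispatches via \eqref{tau}: since each distance $d_i$ is bounded above by the corresponding $\ell_{n(i)}$ (with a short case distinction on whether $\nu_s\ge 1$, handling the possibility $d_1=0$), term-by-term domination gives $(\prod d_i)_p\le(\prod\ell_j^{\nu_j})_p$ directly, so no further work is needed there.
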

\begin{proof} Fix $Q\in {\mathcal P}_N$. There is no loss of generality in assuming $|Q|_{K^{\alpha}}=1.$
In view of \eqref{int}, it suffices to show that
 \begin{equation}\label{apk}
 \frac{|a^{(p)}_k(x)|}{|a_k(x_k)|}  \leq N^p\,h_0^{-N}\, (\rho_1\cdots \rho_p)^{-1}\,\, \text {for}\,\,\, 1\leq k\leq N+1,\, x\in K^{\alpha}.
 \end{equation}
Fix $k$ and $x$. Let degrees $(\nu_j)_{j=0}^s$ correspond to $x$, as it was in Lemma \ref{max} for $\tilde{x}$. Then
$|a_k(x)|=\prod_{i=1}^N  d_i\leq \prod_{j=0}^s \ell_j^{\nu_j}$ with $d_i:=d_i(x,Z\setminus\{x_k\}).$ Suppose $\nu_s\geq 1.$ Then $0\leq d_1\leq \ell_s.$
Due to the choice of the degrees, for every $2\leq i\leq N$ there exists $n(i)$ with  $h_{n(i)}\leq d_i\leq \ell_{n(i)}.$
If  $\nu_s=0$ this is also true for $i=1$ with $n(1)=s-1.$ Here $x$ and $x_k$ belong to the same $I_{j,s}$ with $m_{j,s}=1.$
Since $p\geq 1$, by \eqref{tau}, $(\prod_{j=0}^s h_j^{\nu_j})_p \leq (\prod_{i=1}^N  d_i)_p \leq (\prod_{j=0}^s \ell_j^{\nu_j})_p.$
By \eqref{om}, $|a^{(p)}_k(x)|\leq N^p (\prod_{j=0}^s \ell_j^{\nu_j})_p.$ Applying Lemma \ref{4.2} and \eqref{pi2} yields
$$
\frac{|a^{(p)}_k(x)|}{|a_k(x_k)|}  \leq \frac{N^p (\prod_{j=0}^s \ell_j^{\mu_j})_p }{h_0^{N} \prod_{j=0}^s \ell_j^{\mu_j}}=
\frac{N^p}{h_0^{N}\, _p(\prod_{j=0}^s \ell_j^{\mu_j})},
$$
which gives \eqref{apk}, by \eqref{p-l}.
\end{proof}

The inequality in the previous proposition is exact with respect to the terms $\rho_i.$

{\bf Example.} Let $N=2^s.$ Consider a polynomial $\omega_N(x)=\prod_{k=1}^N(x-x_k)$ that has zeros at all points from $Y_{s-1}$. Then $|\omega_N|_{K^{\alpha}}=|\omega_N(\ell_{s})|\leq \ell_{s}\,\ell_{s-1}\,\ell_{s-2}^2 \cdots \ell_0^{2^{s-1}}$
$=\prod_{i=1}^N \rho_i.$ The exact value of $|\omega_N|_0$  is $\ell_{s} \prod_{i=2}^N d_i(x)$ with $d_i(x)=d_i(x,(x_k)_{k=1}^N)$. As above,
$h_0 \rho_i\leq  d_i(x)\leq \rho_i$ for $i\geq 2.$ Then $|\omega^{(p)}_N(0)|\geq \prod_{i=p+1}^N d_i(0),$ because $\omega^{(p)}_N(0)$ is a sum of
products of the same sign and one of them is $\prod_{i=p+1}^N d_i(0).$ Consequently,
$M_N^{(p)}(K^{\alpha})\geq \frac{|\omega^{(p)}_N(0)|}{|\omega_N|_{K^{\alpha}}}\geq h_0^{N-p}(\rho_1\cdots \rho_p)^{-1}.$

\section{Jackson's type inequality by means of Faber bases}

Suppose that $X(K)$ is a space of functions on $K$, containing polynomials. A polynomial topological basis $(Q_n)_{n=0}^{\infty}$ in $X$ is called a Faber
(or strict polynomial) basis if $ \mathrm{deg}\, Q_n =n$ for all $n$. Thus, for each $f\in X$ there is a unique number sequence  $(\xi_n(f))_{n=1}^{\infty}$
such that the series $\sum_{n=0}^{\infty}\,\xi_n(f)\,Q_n$ converges to $f$ in the topology of $X.$ This gives an estimate of the best uniform approximation
of $f$ by polynomials:
\begin{equation}\label{EN}
E_N(f,K)\leq \sup_{x\in K}|\sum_{n=N+1}^{\infty}\,\xi_n(f)\,Q_n(x)|.
\end{equation}

{\bf Example.} By Lemma 25 in \cite{mi}, the Chebyshev polynomials $(T_n)_{n=0}^{\infty}$ form a basis in the space $C^{\infty}[-1,1]$. The corresponding
biorthogonal functionals are given as follows $\xi_0(f)= \frac{1}{\pi} \int_0^{\pi} f(\cos t)dt,$ $ \xi_n(f) = \frac{2}{\pi} \int_0^{\pi} f(\cos t) \cos nt dt, $
$ n \in \Bbb N.$  By (44) in \cite{mi} (Jackson's theorem is not used!), $|\xi_n(f)|\leq \frac{C_k\,|f|_k}{n^k}$ for each $k$.
Since the basis is absolute and $|T_n|_0=1,$ we get $E_N(f,[-1,1])\leq \sum_{n=N+1}^{\infty} |\xi_n(f)|$. This gives $E_N(f,[-1,1])\leq \frac{C_q\,|f|_{q+1}}{N^q}$ for $q\in \Bbb N.$\\

In the case of small sets $K$, the phenomenon of ultra-fast convergence of polynomials to functions from ${\mathcal E}(K)$ is observed.
Let $\omega_0=1$ and $\omega_n(x)=\prod_{k=1}^n(x-x_k)$ for $n\in {\Bbb N},$ where the points $(x_k)_{k=1}^n$ are chosen in $K^{\alpha}$ by the rule of increase of type. Given $f\in X(K)$ and $n\in{\mathbb Z}_+,$ by $\xi_n(f)$ we denote the divided difference $[x_1,x_2,\cdots,x_{n+1}]f.$ The functionals
$(\xi_n)_{n=0}^{\infty}$ are biorthogonal to $(\omega_n)_{n=0}^{\infty}$. If $\alpha\geq 2$ then, by Theorem 1 in \cite{CA},  the sequence $(\omega_n)_{n=0}^{\infty}$ is a basis in the space ${\mathcal E}(K^{\alpha}).$ This allows us to evaluate $E_N(f,K^{\alpha})$ in terms of the values
$(\rho_i)_{i=1}^{N+1}$ determined in \eqref{pi}. In order to do this, we define analogous $\rho_k(n)$ for another $n.$

Let $2^r\leq n<2^{r+1}$. Then $n=2^r+2^{r_1}+ \cdots +2^{r_m}$ with $0\leq r_m < \cdots < r_1 < r_0:=r.$ In the same way as $\pi_j$ for $N+1$,
we define $\pi_j(n)$ and $(\lambda_j(n))_{j=0}^r$ so that $\prod_{j=0}^m\pi_j(n)=\prod_{i=0}^r  \ell_i^{\lambda_i(n)}=\prod_{k=1}^n \rho_k(n)$
with nondecreasing $\rho_k(n).$ Let us point out some obvious properties of the degrees $\lambda_j(n).$  First, $\lambda_j(n)\leq \lambda_j(n+1)$ with $\lambda_j(n)=\lambda_j(n+1)$ for all $j$ except some $j_0$ for which $\lambda_{j_0}(n+1)=\lambda_{j_0}(n)+1.$ Secondly,
$\prod_{i=0}^r  \ell_i^{\lambda_i(2^r)}=\pi_0(2^r)=\ell_{r}\,\ell_{r-1}\,\ell_{r-2}^2 \cdots \ell_0^{2^{r-1}},$ whereas
$\prod_{i=0}^r  \ell_i^{\lambda_i(2^{r+1}-1)}=\ell_{r}\,\ell_{r-1}^2\,\ell_{r-2}^4 \cdots \ell_0^{2^{r}}.$

Therefore, if $n$ is as above, then $\lambda_j(2^r)\leq \lambda_j(n)\leq \lambda_j(2^{r+1}-1)$ with $\lambda_r(n)=1$ and
$2^{r-j-1}\leq \lambda_j(n)\leq 2^{r-j}$ for $0\leq j \leq r-1.$ If $q=2^w<n$ then
\begin{equation}\label{ql}
_q\left(\prod_{j=0}^r \ell_j^{\lambda_j(n)}\right) \,\leq\,_q\left(\prod_{j=0}^r \ell_j^{\lambda_j(2^r)}\right) = \ell_{r}\,\ell_{r-1}\,\cdots \, \ell_{r-w}^{2^{w-1}}.
\end{equation}

As in Proposition \ref{mf}, for a given $N$ with $2^s\leq N+1<2^{s+1}$, we use \eqref{NN} and \eqref{pi} for $N+1$
and the corresponding $(\rho_i)_{i=1}^{N+1}$ with $\rho_i=\rho_i(N+1).$

\begin{theorem}\label{JT}
Suppose $\alpha\geq 2$ and $\ell_1\leq1/4$. Let $N$ be as above. Then for each $f\in {\mathcal E}(K^{\alpha})$ and $q=2^w$ with $w<s-8$
we have $E_N(f,K^{\alpha})\leq C_q\,\rho_1\cdots \rho_q \, ||f||_{q_1},$ where $C_q$ does not depend on $f$ and $N, q_1=2^{w+8}+1.$
\end{theorem}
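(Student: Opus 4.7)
\textbf{Proof plan for Theorem~\ref{JT}.} The approach is to expand $f$ in the Faber basis $(\omega_n)$ and bound the tail of the resulting series using both an upper estimate on $|\omega_n|_{K^\alpha}$ (via \eqref{pi} and \eqref{ql}) and an estimate on the divided-difference coefficients $\xi_n(f)$ in terms of the Whitney seminorm $\|f\|_{q_1}$.

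Since $\alpha\ge 2$, Theorem~1 of \cite{CA} guarantees that $(\omega_n)_{n=0}^\infty$ is a Faber basis in $\mathcal{E}(K^\alpha)$, so
$$f=\sum_{n=0}^{\infty}\xi_n(f)\,\omega_n,\qquad \xi_n(f)=[x_1,\dots,x_{n+1}]\,f,$$
with convergence in $\tau$ and hence uniformly on $K^\alpha$. By \eqref{EN} the goal reduces to showing
$$\sum_{n=N+1}^{\infty}|\xi_n(f)|\cdot|\omega_n|_{K^\alpha}\;\le\;C_q\,\rho_1\cdots\rho_q\,\|f\|_{q_1}.$$

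For each $n$ with $2^r\le n<2^{r+1}$, use \eqref{pi} to write $|\omega_n|_{K^\alpha}\le\prod_{i=0}^{r}\ell_i^{\lambda_i(n)}$. By \eqref{ql} the $q$ smallest terms of this product are dominated by $\ell_r\,\ell_{r-1}\cdots\ell_{r-w}^{2^{w-1}}$, which, for $r\ge s$, is absorbed by $\rho_1\cdots\rho_q$ (attached to $N+1$) up to geometric decay in $r-s$. The remaining factors $(\prod_i\ell_i^{\lambda_i(n)})_q$ shrink super-exponentially in $n$ because $\ell_r=\ell_1^{\alpha^{r-1}}$ with $\alpha\ge 2$ and $\ell_1\le 1/4$, which is what delivers summability of the tail.

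The divided-difference estimate is the main obstacle. Since $q_1=2^{w+8}+1$ and $w<s-8$, one checks $q_1<2^s\le N+1\le n$, so the Taylor polynomial of order $q_1$ at any base point has degree strictly less than $n$ and is annihilated by $[x_1,\dots,x_{n+1}]$. Use \eqref{quot} to extend $f$ to $F\in C^{q_1}(I)$ with $|F|_{q_1,I}\le C\|f\|_{q_1}$. Fixing a base point $x^{*}\in K^\alpha$ in a basic interval of level roughly $w+8$, the annihilation property gives
$$\xi_n(f)=\sum_{i=1}^{n+1}\frac{(F-T_{x^{*}}^{q_1}F)(x_i)}{a_i(x_i)},\qquad |(F-T_{x^{*}}^{q_1}F)(x_i)|\le\frac{|F^{(q_1)}|_I}{q_1!}\,|x_i-x^{*}|^{q_1}.$$
The resulting sum $\sum_i|x_i-x^{*}|^{q_1}/|a_i(x_i)|$ is then controlled by grouping the nodes $x_i$ according to the dyadic level at which they separate from $x^{*}$, using \eqref{pi2} and Lemma~\ref{4.2} for $|a_i(x_i)|^{-1}$, and comparing powers of $\ell_j$ against powers of $|x_i-x^{*}|$. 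The large gap $q_1-q$ is exactly what forces the numerator $|x_i-x^{*}|^{q_1}$ to dominate every $\ell_j^{-\mu_j}$ for $j\ge w+1$ with geometric margin; this is the quantitative source of the exponent $8$ in $q_1=2^{w+8}+1$ and of the hypothesis $w<s-8$.

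Finally, combining the two ingredients, the factor $\rho_1\cdots\rho_q$ is extracted from $|\omega_n|_{K^\alpha}$, and what remains is a tail $\sum_{n\ge N+1}$ of a super-exponentially convergent series bounded by a constant $C_q$ depending only on $q$. The genuinely hard part is the uniform-in-$n$ bound on the divided-difference sum above; everything else amounts to organising the dyadic bookkeeping already set up in Section~3 and to invoking Lemmas~\ref{max} and~\ref{4.2}.
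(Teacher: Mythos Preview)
Your overall strategy coincides with the paper's: expand $f$ in the Faber basis $(\omega_n)$, bound the tail $\sum_{n\ge N+1}|\xi_n(f)|\,|\omega_n|_{K^\alpha}$, and use the gap between $q$ and $q_1$ to absorb all losses so that only $\rho_1\cdots\rho_q$ survives. The main structural difference is that the paper does \emph{not} re-derive the divided-difference bound: it imports from \cite{CA} (equation~(2) there, together with \eqref{quot}) the ready-made estimate
\[
|\xi_n(f)|\;\le\;C\,\|f\|_{q_1}\,2^{\,n}\,h_0^{-n}\,\Bigl(\prod_{j}\ell_j^{\lambda_j(n)}\Bigr)_{q_1}^{-1},
\]
so that $|\xi_n(f)|\cdot|\omega_n|_0\le C\,\|f\|_{q_1}\,(2/h_0)^n\cdot{}_{q_1}\bigl(\prod_j\ell_j^{\lambda_j(n)}\bigr)$. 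Your Taylor-remainder computation with a single base point $x^{*}$ is essentially the argument behind that cited inequality; via \eqref{pi2} it will also produce an $h_0^{-n}$ (from $|a_i(x_i)|^{-1}$) and an $n{+}1$ (from the number of nodes), i.e.\ the same exponential prefactor. So the two routes merge.

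Where your plan is thin is the bookkeeping for this prefactor. In the paper the hypothesis $\ell_1\le 1/4$ is used exactly once, to get $2/h_0\le 1/\ell_1$ and hence $(2/h_0)^n\le \ell_1^{-2^{r+1}}$; then, writing ${}_{q_1}(\prod\ell_j^{\lambda_j(2^r)})={}_{q}(\prod\ell_j^{\lambda_j(2^r)})\cdot\{\ell_{r-w-1}^{2^{w}}\cdots\ell_{r-w-8}^{2^{w+7}}\cdot\ell_{r-w-8}\}$, one observes that each of the eight blocks $\ell_{r-k-1}^{2^{k}}$ contributes at most $\ell_1^{2^{r-2}}$ (here $\alpha\ge 2$ is used), so the eight blocks together kill $\ell_1^{-2^{r+1}}$. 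That is the precise ``quantitative source of the exponent~$8$'': it neutralises the exponential factor coming from the $\xi_n$-bound, not a direct comparison of $|x_i-x^{*}|^{q_1}$ against individual $\ell_j^{-\mu_j}$. Your description attributes the gap $q_1-q$ to the latter mechanism and never names the $(2/h_0)^n$ term; if you run your Taylor argument honestly you will see this factor appear, and the paper's arithmetic is what disposes of it. Apart from this mis-attribution, your plan is correct and aligned with the paper's proof.
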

\begin{proof} Fix $N, q$ and $f$ as above. By \eqref{EN}, $E_N(f,K^{\alpha})\leq \sum_{n=N+1}^{\infty}\,|\xi_n(f)|\cdot |\omega_n|_0$
with the decomposition $\sum_{n=N+1}^{\infty}=\sum_{n=N+1}^{2^{s+1}-1}+ \sum_{r=s+1}^{\infty}\sum_{n=2^r}^{2^{r+1}-1}=:\Sigma_1+\Sigma_2.$

Let $2^r\leq n<2^{r+1}$ with $r\geq s+1.$ To estimate $|\xi_n(f)|\cdot |\omega_n|_0$ from above, we use the arguments of Theorem 1 in \cite{CA} with minor modifications. For each $x\in K^{\alpha},$ the estimate $|\omega_n(x)|\leq \prod_{j=0}^r \ell_j^{\lambda_j(n)}$ holds true.
We apply (2) in \cite{CA} with $q_1$ instead of $q$ using the following two improvements. The Open Mapping Theorem can be applied to the space
${\mathcal E}^{q_1}(K^{\alpha}),$  giving the $q_1$ version of \eqref{quot} instead of (3) in \cite{CA}. And three lines above (3) in \cite{CA}
we do not replace $1-2\ell_1=h_0$ by $\ell_1.$ This gives
$|\xi_n(f)|\leq C ||\,f\,||_{\,q_1} 2^n\cdot  h_0^{-n} (\prod_{j=0}^r \ell_j^{\lambda_j(n)})^{-1}_{q_1}.$ Hence,
$$ |\xi_n(f)| \cdot |\omega_n|_0\leq C\,||\,f\,||_{\,q_1} \,\left(\frac{2}{h_0}\right)^n\,_{q_1}\left(\prod_{j=0}^r \ell_j^{\lambda_j(n)}\right).$$

Let's show that $\sum_{n=2^r}^{2^{r+1}-1} \left(\frac{2}{h_0}\right)^n\,_{q_1}\left(\prod_{j=0}^r \ell_j^{\lambda_j(n)}\right) \leq \,_{q}\left(\prod_{j=0}^r \ell_j^{\lambda_j(2^r)}\right)$. By condition, $\frac{2}{h_0} \leq \frac{1}{\ell_1},$ so $\left(\frac{2}{h_0}\right)^n< \ell_1^{-2^{r+1}}.$
On the other hand, the second term of the product in the sum above takes maximum value if $n=2^r.$ Therefore the whole sum does not exceed
$2^r \ell_1^{-2^{r+1}}\, _{q_1}\left(\prod_{j=0}^r \ell_j^{\lambda_j(2^r)}\right).$ Here the last term is
$ \,_{q}\left(\prod_{j=0}^r \ell_j^{\lambda_j(2^r)}\right)\cdot \{\ell_{r-w-1}^{2^{w}}\cdots \ell_{r-w-8}^{2^{w+7}}\cdot \ell_{r-w-8}\}.$
Of course, $2^r\ell_{r-w-8}<1$ for sufficiently large $r$. Also we have in braces 8 terms of the type $\ell_{r-k-1}^{2^{k}}$.
Since $\alpha\geq 2$, for each of them we have  $\ell_{r-k-1}^{2^{k}}=\ell_1^{\alpha^{r-k-2} 2^k}\leq \ell_1^{2^{r-2}}$, so their product neutralizes $\ell_1^{-2^{r+1}}$. From this
$$ \Sigma_2\leq C\,||\,f\,||_{\,q_1} \sum_{r=s+1}^{\infty} \,_{q}\left(\prod_{j=0}^r \ell_j^{\lambda_j(2^r)}\right).$$

It is easy to check that the first term in the above sum dominates, so the whole sum does not exceed twice the first term. By monotonicity,
$\lambda_j( 2^{s+1})\geq \lambda_j(N+1)$ for all $j$. Therefore,
$$ \Sigma_2\leq 2C\,||\,f\,||_{\,q_1}\,\cdot _{q}\left(\prod_{j=0}^s \ell_j^{\lambda_j(N+1)}\right)= 2C\,||\,f\,||_{\,q_1}\,\rho_1\cdots \rho_q .$$

Similar arguments apply to $\Sigma_1$ with $N+1\leq n <2^{s+1}$, but now we estimate $|\xi_n(f)|\cdot |\omega_n|_0$ directly using
$_{q}\left(\prod_{j=0}^r \ell_j^{\lambda_j(N+1)}\right).$ This gives the desired result.
\end{proof}

{\bf Remark.} The condition $\ell_1\leq 1/4$ is not particularly restrictive. Enlarging $q_1$ allows us to neutralize $\left(\frac{2}{h_0}\right)^n$
for larger values of $\ell_1$ as well. However, the condition $\alpha\geq 2$ is important here since the sequence $(\omega_n)_{n=0}^{\infty}$
is not a basis in ${\mathcal E}(K^{\alpha})$ for $\alpha < 2.$ We believe that these spaces also have Faber interpolation bases with a different,
more complex choice of interpolation nodes, but we cannot present them. For this reason, our main result is given only for ${\mathcal E}(K^2).$

Comparison of Theorems \ref{mf} and \ref{JT}  shows that $E_N(f,K^{\alpha})$ successfully neutralizes the fast growth of factors $M_N^{(p)}(K^{\alpha})$
for all $\alpha\geq 2$. Let us show that an even stronger fact holds for $\alpha=2$.

\begin{proposition}\label{mf-jt}
For each $p$ and $r>p$ there exists $r_1$ such that
$$M_N^{(p)}(K^2)\cdot E_{N-1}(f,K^2)\leq \rho_{p+1}(N+1)\cdots \rho_r(N+1)\,||\,f\,||_{\,r_1}$$
for sufficiently large $N$. Here we assume $\ell_1\leq 1/3.$
\end{proposition}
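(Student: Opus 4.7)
The plan is to multiply the estimate of Theorem~\ref{mf} by the estimate of Theorem~\ref{JT} (applied to $E_{N-1}$), and to absorb every bad factor into a single sufficiently large Whitney seminorm $||f||_{r_1}$. This works precisely because $\alpha=2$: the doubly-exponential decay $\ell_j=\ell_1^{2^{j-1}}$ ensures that every additional small $\rho$-factor contributes a fixed ``quantum'' of order $\ell_1^{2^{s-2}}$, and a bounded (i.e.\ $N$-independent) number of such quanta overpowers both the exponential $h_0^{-N}$ and the worst-case loss $1/\ell_s$ that will appear below.

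First I would combine the two estimates. From Theorem~\ref{mf} one has $M_N^{(p)}(K^2)\le h_0^{-N}(N+1)N^p/(\rho_1(N+1)\cdots \rho_p(N+1))$. Choosing $q=2^w$ for a constant $w$ to be fixed at the end, and applying Theorem~\ref{JT} with $N-1$ in place of $N$, one obtains $E_{N-1}(f,K^2)\le C_q \rho_1(N)\cdots \rho_q(N)\,||f||_{q_1}$ with $q_1=2^{w+8}+1$. Setting $r_1=q_1$, it suffices to prove that for $N$ sufficiently large
\[\frac{h_0^{-N}(N+1)N^p\,\rho_1(N)\cdots \rho_q(N)}{\rho_1(N+1)\cdots \rho_r(N+1)}\le \text{const}.\]

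Next I would compare the two $\rho$-sequences. Recall that exactly one coordinate $\lambda_{j_0}$ increases (by $+1$) when $N$ is replaced by $N+1$, so the sorted sequence $(\rho_k(N+1))_{k=1}^{N+1}$ is obtained from $(\rho_k(N))_{k=1}^N$ by inserting a single $\ell_{j_0}$ at some position $k_0$. A direct case split on whether $k_0\le r$ or $k_0>r$ yields
\[\frac{\rho_1(N)\cdots \rho_q(N)}{\rho_1(N+1)\cdots \rho_r(N+1)}\le \ell_{j_0}^{-1}\,\rho_r(N)\cdots \rho_q(N)\le \ell_s^{-1}\,\rho_1(N)\cdots \rho_q(N),\]
using $\ell_{j_0}\ge \ell_s$ in the last step. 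Then I would verify the resulting bound quantitatively: since $\ell_1\le 1/3$ gives $h_0\ge 1/3$, we have $h_0^{-N}\le 3^{2^{s+1}}$ and $\ell_s^{-1}\le 3^{2^{s-1}}$; the product $\rho_1(N)\cdots \rho_{2^w}(N)$ is bounded via inequality~\eqref{ql} by $\ell_{s'}\ell_{s'-1}\cdots \ell_{s'-w}^{2^{w-1}}$, which, because of $\alpha=2$ and the identity $\ell_{s'-i}^{2^{i-1}}=\ell_1^{2^{s'-2}}$, is of order $\ell_1^{w\cdot 2^{s-2}}$. Consolidating, the left-hand side above is bounded by $3^{2^{s+1}+2^{s-1}-w\cdot 2^{s-2}}$ times polynomial-in-$N$ factors, which tends to $0$ for any $w$ larger than an explicit constant depending on $p$, $\ell_1$ and $h_0$. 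Fixing such a $w$ and setting $r_1=2^{w+8}+1$ yields the conclusion.

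The main obstacle is exactly the factor $\ell_{j_0}^{-1}$ appearing in the comparison step: since $\ell_{j_0}$ can be as small as $\ell_s=\ell_1^{2^{s-1}}$, it produces a doubly-exponential-in-$s$ loss that no polynomial factor in $N$ could control. This is why the result is stated for $\alpha=2$: the doubly-exponential decay of $\ell_j$ then provides matching doubly-exponential gains of size $\ell_1^{c\cdot 2^{s-2}}$ from the additional small $\rho$-factors, and a constant number of them simultaneously neutralizes $\ell_s^{-1}$, the factor $h_0^{-N}$, and the polynomial $(N+1)N^p$.
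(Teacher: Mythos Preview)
Your strategy is exactly the paper's: multiply Theorem~\ref{mf} by Theorem~\ref{JT}, absorb the loss $1/\ell_s$ from comparing $\rho_k(N)$ with $\rho_k(N+1)$, and kill $h_0^{-N}$ and $\ell_s^{-1}$ with a bounded number of extra blocks $\ell_{s-k-1}^{2^k}=\ell_1^{2^{s-2}}$. Two quantitative slips, however, break the chain as written.

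First, the last inequality in your displayed comparison is in the wrong direction: since every $\rho_k\le 1$, one has $\rho_1(N)\cdots\rho_q(N)\le \rho_r(N)\cdots\rho_q(N)$, not the reverse. So you cannot pass from $\ell_{j_0}^{-1}\rho_r(N)\cdots\rho_q(N)$ to $\ell_s^{-1}\rho_1(N)\cdots\rho_q(N)$. You must keep $\rho_r(N)\cdots\rho_q(N)$ and bound it directly; this is precisely what the paper does in its inequality~\eqref{mj}, replacing $\rho_r(N)\cdots\rho_{r_1}(N)$ by the explicit product $\ell_{s-w-1}^{2^w}\cdots\ell_{s-w-10}^{2^{w+9}}\cdot\ell_{s-w-11}$ (the first $r-1$ smallest terms being removed). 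The estimate via \eqref{ql} still applies, just with $r-1$ fewer factors, yielding roughly $\ell_1^{(w-\log_2 r)\,2^{s-2}}$ instead of $\ell_1^{w\cdot 2^{s-2}}$.

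Second, $\ell_s^{-1}\le 3^{2^{s-1}}$ is false: $\ell_1\le 1/3$ gives $\ell_1^{-1}\ge 3$, hence $\ell_s^{-1}=\ell_1^{-2^{s-1}}\ge 3^{2^{s-1}}$. Mixing base $3$ (for $h_0^{-N}$) with base $\ell_1$ (for $\ell_s^{-1}$ and the $\rho$-product) therefore fails. The remedy, as in the paper, is to work entirely in powers of $\ell_1$: from $\ell_1\le 1/3$ one gets $h_0=1-2\ell_1\ge \ell_1$, so $h_0^{-N}\le \ell_1^{-2^{s+1}}=\ell_s^{-4}$; together with $\ell_s^{-1}$ the total bad factor is $\ell_s^{-5}$, and since each block contributes $\ell_{s-k-1}^{2^k}=\ell_1^{2^{s-2}}=\ell_s^{1/2}$, ten blocks already give $\ell_s^{5}$. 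With these two corrections your proof coincides with the paper's.
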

\begin{proof} As above, $2^s\leq N+1<2^{s+1}.$ Recall that $M_N^{(p)}(K^2)$ is determined by $\rho_k(N+1),$ whereas $E_{N-1}(f,K^2)$
is given in terms of $\rho_k(N).$ Fix $p$ and $r>p.$ There is no loss of generality in assuming $r=2^w$. Set $r_1=2^{w+10}.$ We apply Theorem \ref{JT}
with $r$ instead of $q$. We need to show
$$ C_r (N+1)N^p h_0^{-N} \frac{\rho_1(N)\cdots \rho_{r_1}(N)}{\rho_1(N+1)\cdots \rho_p(N+1)}\leq \rho_{p+1}(N+1)\cdots \rho_r(N+1)$$
or equivalently
$$ C_r (N+1)N^p h_0^{-N} \frac{\rho_1(N)\cdots \rho_{r-1}(N)}{\rho_1(N+1)\cdots \rho_r(N+1)}  \rho_r(N)\cdots \rho_{r_1}(N)\leq 1.$$
The fraction in the middle is $\frac{_{r-1}\left(\prod_{j=0}^s \ell_j^{\lambda_j(N)}\right)}{_r\left(\prod_{j=0}^s \ell_j^{\lambda_j(N+1)}\right)},$
where the denominator contains all the factors of the numerator, so the fraction is $\frac{1}{\ell_j}$ with some $0\leq j \leq s.$ In the worst case
it is $\frac{1}{\ell_s}.$ In addition, $h_0\geq \ell_1$ and $N<2^{s+1}$ imply $h_0^{-N}< \ell_1^{-2^{s+1}}=\ell_s^{-4}$. It suffices to prove that
\begin{equation}\label{mj}
C_r (N+1)N^p \rho_r(N)\cdots \rho_{r_1}(N)\leq \ell_s^5.
\end{equation}
Since $N\geq 2^s-1,$ we have $\prod_{j=0}^N\rho_j(N)\leq \prod_{j=0}^N\rho_j( 2^s-1)= (\ell_{s}\,\ell_{s-1}^2\, \cdots \ell_{s-w}^{2^{w-1}})\cdot \ell_{s-w-1}^{2^{w}}\cdot\ell_0^{2^{s}},$ where the product in parentheses contains $r-1$ terms.  We can only enlarge the left side of \eqref{mj} by replacing $\rho_r(N)\cdots \rho_{r_1}(N)$ with $\ell_{s-w-1}^{2^{w}}\cdots\ell_{s-w-10}^{2^{w+9}}\cdot \ell_{s-w-11},$ containing exactly $r_1-r+1$ terms.
Now the product $C_r (N+1)N^p\cdot \ell_{s-w-11}$ does not exceed 1 for sufficiently large $N$. Also, $\ell_{s-k-1}^{2^k}=\ell_1^{2^{s-2}},$ and we have
$10$ such terms, so their product is $\ell_1^{10\cdot 2^{s-2}},$ which is equal to $\ell_s^5.$ This is the desired conclusion.
\end{proof}

\section{Simultaneous extensions of basic polynomials}
How does $K^2$ with the extension property differ from $K^{\alpha}, \alpha > 2,$ without it?
Let us show that the difference depends mainly on the possibility of suitable individual extensions of the basic interpolation polynomials.

Let $K$ and  $Q\in {\mathcal P}_N$ be such as in Section 2. We fix $\delta>0$ and a segment  $I$ containing $K_{\delta}$.
Write $\tilde Q=Q\cdot u_{\delta}.$ Clearly, $|Q|_{0,K}\leq |\tilde Q|_{0,I}.$  We will use the notation $\tilde Q\sim Q$ if
$|\tilde Q|_{0,I}\leq C |Q|_{0,K}$ with some $C$ independent of $Q$ and $N$. If $K$ is Markov, then by \eqref{IE} the choice $\delta=N^{-r}$
provides $|\tilde Q|_{0,I}\leq C |Q|_{0,K}$. In addition, suppose that $Q$ realizes $M_N^{(p)}(K)$ for $p<N.$
This means that the converse of \eqref{MP} is true with some constant $c_1$ instead of $C_1$, similar to how Chebyshev polynomials implement $M_N^{(p)}([-1,1])$.
Then by \eqref{IE1},  $\tilde Q^{(p)}\sim Q^{(p)},$ so there exists $\delta=\delta(N)$ which can be applied to extend both $Q$ and its derivatives.

In general, $\delta$ depends on $p$. Let us illustrate this with an apparent example. From now on, $u_{\delta}$ is determined by the function
$\varphi$ with $\varphi(x)=1$ for $x\leq 0,\,\varphi(x)=0$ for $x\geq 1$ and
\begin{equation}\label{FF}
\varphi(x)=\exp\left[\frac{1}{x-1} \exp\left(-\frac{1}{x}\right)\right]
\end{equation}
for $0<x<1.$ We see that $\varphi\in C^{\infty}(\Bbb R)$ with $\varphi(\frac{1}{2})>\frac{1}{2}.$ Given $\delta>0,$ let
$\varphi_{\delta}(x)=\varphi(x/\delta).$ Given $K$ with a complementary interval $(a,b)$ with $b-a\geq 2\delta$, we define $u_{\delta}=1$ on $K$,
$u_{\delta}(x)=\varphi_{\delta}(x-a)$ for $a<x<a+\delta, u_{\delta}(x)=b-x$ for $b-\delta<x<b$, and $u_{\delta}(x)=0$ for $a+\delta\leq x\leq b-\delta.$
If $b-a<2\delta$ then $u_{\delta}=1$ on $(a,b).$ Then $u_{\delta}$ has the desired properties indicated in Section 2.

{\bf Example.} Let $K=[-\varepsilon, \varepsilon]$ for a small $\varepsilon.$ Of course, $K$ is Markov, but with the constant $C_1$ in \eqref{MP}
depending essentially on $\varepsilon.$ Indeed, fix $\delta>0$. As above, $\tilde Q=Q\cdot u_{\delta}.$ Suppose that $|\tilde Q|_{0,I}\leq C |Q|_{0,K}$ and $|\tilde Q''|_{0,I}\leq C n^2 |Q|_{2,K}$ hold for $Q \in {\mathcal P}_n,$ where $I=[-1,1].$ Then for $Q(x)=x$ we have
$|\tilde Q|_{0,I}\geq \tilde Q(\varepsilon+\frac{\delta}{2})>(\varepsilon+\frac{\delta}{2})\frac{1}{2}>\frac{\delta}{4}$ as
$\varphi_{\delta}(\frac{\delta}{2})>\frac{1}{2}.$ On the other hand, $Q'(\varepsilon)=1, Q'(\varepsilon+\delta)=0,$ so, by the mean value theorem,
$|\tilde Q''|_{0,I}\geq \frac{1}{\delta}.$ Here, $|Q|_{0,K}=\varepsilon, |Q|_{2,K}=1.$ Hence, if the inequalities above hold then
$\frac{\delta}{4}\leq C \varepsilon$ and $\frac{1}{\delta}\leq C,$ which means that $C\geq \frac{1}{2\sqrt{\varepsilon}}.$\\

An extension of polynomials that provides the above equivalence can be called {\it strong simultaneous extensions}. The term {\it simultaneous extensions}
was used in \cite{og} for the existence of a linear extension operator. In our case, instead of \eqref{IE}, we consider the following condition
for the sequence $(\omega_n)_{n=0}^{\infty}$ defined in Section 5
\begin{equation}\label{seom}
\exists (\delta_n)_{n=1}^{\infty}: \forall p \exists q, C: \,\,\,\,\,|\tilde{\omega}_n|_p\leq C |\omega_n|_q, \,\,\,n\in \Bbb N.
\end{equation}

\begin{proposition}\label{=2}
Choosing $\delta_n=\ell_s$ for $2^s\leq n<2^{s+1}$ provides \eqref{seom} on $K^2$. Here we assume $\ell_1\leq 1/3.$
\end{proposition}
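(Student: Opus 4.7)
The plan is to apply the Leibniz rule to $(\omega_n u_{\delta_n})^{(p)}$ and handle the resulting terms region by region. Thanks to the explicit cutoff \eqref{FF}, the function $\varphi$ is flat at $0$, so all derivatives of $u_{\delta_n}$ vanish at every endpoint of a complementary gap of $K^2$ and, by continuity and density of endpoints, at every point of $K^2$. Hence $(\omega_n u_{\delta_n})^{(p)}(x) = \omega_n^{(p)}(x)$ for $x\in K^2$, trivially bounded by $|\omega_n|_p\leq |\omega_n|_q$ for any $q\geq p$; outside $K^2_{\ell_s}$ the expression vanishes.

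For $x\in K^2_{\ell_s}\setminus K^2$ I would separate two subcases. If $x$ lies in a complementary interval of width less than $2\ell_s$ then $u_{\delta_n}\equiv 1$ there and $(\omega_n u_{\delta_n})^{(p)}(x)=\omega_n^{(p)}(x)$. Otherwise $x$ lies in the transition zone of a wide gap, within $\ell_s$ of a nearest endpoint $y\in K^2$, and Leibniz produces
$$\bigl|(\omega_n u_{\delta_n})^{(p)}(x)\bigr|\leq \sum_{j=0}^{p}\binom{p}{j}\,c_j\,\ell_s^{-j}\,|\omega_n^{(p-j)}(x)|.$$
So in both subcases the task reduces to bounding $|\omega_n^{(m)}(x)|$ for $x\in K^2_{\ell_s}$ and $0\leq m\leq p$.

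For this I would use the Taylor expansion of the polynomial $\omega_n$ at a closest $y\in K^2$ (with $|x-y|\leq \ell_s$),
$$|\omega_n^{(m)}(x)|\leq \sum_{k=0}^{n-m}\frac{\ell_s^k}{k!}\,|\omega_n^{(m+k)}(y)|,$$
splitting the sum at $m+k=q$. Terms with $m+k\leq q$ are immediately controlled by $(q+1)|\omega_n|_q$. For $m+k>q$ one applies Theorem~\ref{mf} to $\omega_n^{(q)}$ (a polynomial of degree $n-q$) to get $|\omega_n^{(m+k)}(y)|\leq M_{n-q}^{(m+k-q)}(K^2)\cdot|\omega_n|_q$. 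The key algebraic observation is that $\delta_n=\ell_s=\rho_1(n)$ is the smallest of the $\rho_k(n)$, so each $\ell_s$ in the numerators coming from the Taylor step (and the $\ell_s^{-j}$ from the Leibniz step) can be matched against the $\rho_i$'s appearing in the denominator of the bound in Theorem~\ref{mf}; for $q$ chosen large enough in terms of $p$, the Leibniz--Taylor double sum telescopes to a constant multiple of $|\omega_n|_q$.

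The main obstacle is the honest bookkeeping needed to absorb the factor $h_0^{-(n-q)}$ coming from Theorem~\ref{mf}, which is precisely the obstruction present in the non-Markov setting. To overcome it one exploits the geometry specific to $\alpha=2$: the assumption $\ell_1\leq 1/3$ yields $\ell_s\leq h_{s-1}$, so $K^2_{\ell_s}$ never merges adjacent level-$s$ basic intervals, and one can compare distances zero-by-zero via
$$\prod_k \frac{|x-x_k|}{|y-x_k|}\leq \exp\!\Bigl(\sum_k \frac{2\ell_s}{|y-x_k|}\Bigr),$$
where the right-hand sum is dominated by the convergent geometric-type series $\sum_j 2^{s-j}\,\ell_s/\ell_j$ (finite for $\alpha=2$ since $\ell_s/\ell_{s-1}=\ell_{s-1}\to 0$). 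This replaces the Markov bound by a size-preserving transfer of estimates from $K^2$ to $K^2_{\ell_s}$, neutralizes the exponential factor, and, for $q=q(p)$ sufficiently large, delivers the required inequality $|\tilde\omega_n|_p\leq C\,|\omega_n|_q$.
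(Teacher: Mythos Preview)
Your last paragraph is where the argument breaks. In the transition zone the nearest point $y\in K^2$ to $x$ is an endpoint of a gap of width $\geq 2\ell_s$; for $\alpha=2$ such gaps sit at level $\le s-1$, so $y\in Y_{s-1}=\{x_1,\dots,x_{2^s}\}\subset\{x_1,\dots,x_n\}$. Thus $y$ is itself one of the zeros of $\omega_n$, the factor $|y-x_{k_0}|$ vanishes, and your comparison $\prod_k|x-x_k|/|y-x_k|\le\exp\bigl(\sum_k 2\ell_s/|y-x_k|\bigr)$ blows up. More fundamentally, the exponential $h_0^{-(n-q)}$ you want to neutralize comes from Theorem~\ref{mf}, i.e.\ from bounding high derivatives \emph{on $K^2$} in terms of $|\omega_n|_q$; a transfer estimate from $K^2$ to $K^2_{\ell_s}$, however sharp, acts on a different part of the inequality and cannot remove it. In fact your matching step already leaves a surplus factor $\ell_s^{\,q-p}=\ell_1^{(q-p)2^{s-1}}$, and for $\alpha=2$ this alone dominates $(e/h_0)^{n-q}\le(e/h_0)^{2^{s+1}}$ once $q-p$ exceeds a constant depending only on $\ell_1$; so the Taylor--Markov route can be closed by honest bookkeeping, without the zero-by-zero device.

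The paper's proof is organized differently and avoids both the Taylor expansion and Theorem~\ref{mf}. It first proves the \emph{lower} bound $|\omega_{N+1}^{(q)}(0)|\geq h_0^{N+1-q}\rho_q\cdots\rho_{N+1}$ by observing that at $x=0$ all summands in the $q$-th derivative carry the same sign, so a single summand already gives the estimate. For the upper bound it applies \eqref{om} directly at $x\in K^2_{\ell_s}$ via $d_k(x)\le\rho_k+\ell_s$ together with $\prod_{k\ge 2}(\rho_k+\ell_s)\le 2\prod_{k\ge 2}\rho_k$, obtaining $|\omega_{N+1}^{(j)}(x)|\le 4(N+1)^j\prod_{k>j}\rho_k$; after Leibniz the dominant contribution is $c'_p\,N\,\ell_s^{-p+1}\rho_2\cdots\rho_{N+1}$. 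Dividing, the task becomes $N\,\rho_2\cdots\rho_{q-1}\le C\,h_0^{N+1-q}\ell_s^{p-1}$, which for $\alpha=2$ reads $N\,\ell_1^{\,w\,2^{s-2}}\le C\,\ell_1^{\,2^{s+1}+(p-1)2^{s-1}}$ and holds with $q=2^w+1$, $w=2p+6$. The exponential factor is again absorbed by powers of $\ell_s$, not by any transfer mechanism.
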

\begin{proof}
The previous formulas are given mainly for $N+1$, so we prove \eqref{seom} for $n=N+1$. Both parts of \eqref{seom} will be expressed in terms
of $\rho_k(N+1)$ with $N$ as in \eqref{NN}. Thus, $\delta_{N+1}=\ell_s.$
Since $\delta_{N+1}$ is not included in the right-hand side of \eqref{seom},  we first estimate $||\omega_{N+1}||_q$ for a given $1\leq q<N+1.$
Henceforth $q=2^w+1$, where $w$ will be defined later depending on $p$. Our claim is that
\begin{equation}\label{qqq}
|\omega_{N+1}^{(q)}|\geq h_0^{N+1-q}\cdot \rho_q \cdots \rho_{N+1}.
\end{equation}
Recall that $\omega_{N+1}(x)=\prod_{i=1}^{N+1}(x-x_i),$ where $Z=(x_k)_{k=1}^{N+1}$ are chosen by the rule of increase of type, so $x_1=0$. Then
$a_1(x)=\prod_{i=2}^{N+1}(x-x_i).$ Let $d_k:=d_k(0,Z)$. Then, of course, $d_1=0$ and $d_{k+1}=d_k(0,Z\setminus\{0\})$ for $1\leq k<N.$
The function $a_1$ determines the powers $(\mu_j)_{j=0}^s$ such that \eqref{pi2} holds for $|a_1(0)|=\prod_{k=1}^{N}d_k(0,Z\setminus\{0\}).$
As in Theorem \ref{mf}, for each $1\leq k<N$ there is an index $j$ with $h_j\leq d_k(0,Z\setminus\{0\})\leq \ell_j.$
Removing the $q-1$ smallest terms from the above product gives
$\prod_{k=q}^{N}d_k(0,Z\setminus\{0\})\geq \left(\prod_{j=0}^s h_j^{\mu_j}\right)_{q-1}\geq h_0^{N+1-q}\left(\prod_{j=0}^s \ell_j^{\mu_j}\right)_{q-1}.$

We now turn to $\omega_{N+1}^{(q)}$. It is a sum of $\frac{(N+1)!}{(N+1-q)!}$ products, each containing $N+1-q$ terms, so they all have the same sign,
and one of them is $d_{q+1}\cdots d_{N+1}.$ Therefore,
$$ |\omega_{N+1}^{(q)}|\geq \prod_{k=q+1}^{N+1}d_k= \prod_{k=q}^{N}d_k(0,Z\setminus\{0\})\geq  h_0^{N+1-q}\left(\prod_{j=0}^s \ell_j^{\mu_j}\right)_{q-1}.$$

Applying \eqref{l-p} yields \eqref{qqq}.\\

We proceed to estimate from above $|\tilde{\omega}_{N+1}^{(p)}(x)|$ for fixed $p$ and $x$. It is clear that only $x$ outside the set should be considered.
Fix $x$ with $0<\mathrm{dist}(x,K^2)=|x-y|<\ell_s.$
By \eqref{pi}, $|\omega_{N+1}(y)|\leq \prod_{k=1}^{N+1}\rho_k,$ so $|\omega_{N+1}(x)|\leq \prod_{k=1}^{N+1}(\rho_k+\ell_s).$
Here, $\rho_1=\ell_s, \rho_2=\ell_{s-1}$ and $\rho_3=\ell_{s-1}$ or $\ell_{s-2},$ depending on the value of $N+1.$
In a fairly straightforward way, one can show that $\prod_{k=2}^{N+1}(\rho_k+\ell_s) \leq 2 \prod_{k=2}^{N+1}\rho_k.$ This and \eqref{om} give
$|\omega_{N+1}^{(j)}(x)|\leq 4 (N+1)^j \,\prod_{k=j+1}^{N+1}\rho_k.$ Also, $|u^{(i)}_{\delta_{N+1}}(x)|\leq c_i \ell_s^{-i}$.
Without loss of generality, we can assume that $c_i$ increases with $i$. Then, by Leibnitz's rule,
$$|(\omega_{N+1}\cdot u_{\delta})^{(p)}(x)|\leq 4 c_p \sum_{j=0}^p  \binom{p}{j}(N+1)^j \,t_j,$$
where  $t_j:=\ell_s^{-p+j}\,\prod_{k=j+1}^{N+1}\rho_k.$ We see that $t_0=t_1=\ell_s^{-p+1}\cdot \rho_2 \cdots \rho_{N+1},$ while the following terms decrease very rapidly. Hence, $|(\omega_{N+1}\cdot u_{\delta})^{(p)}(x)|\leq 4 c_p t_0 [1+p(N+1)+o(1)]\leq c'_pNt_0,$
where $c'_p$ does not depend on $N$. Combining this with \eqref{qqq} we reduce the desired inequality to
$N\cdot \rho_2 \cdots \rho_{q-1} \leq  C\ell_1^{N+1-q}\cdot \ell_s^{p-1},$ because $\ell_1\leq h_0$ by condition.
Let us replace $\rho_k$ with $\rho'_k$ determined by $2^s.$ Then
$\prod_{k=2}^{q-1}\rho'_k=\ell_{s-1}\ell_{s-2}^2\cdots \ell_{s-w}^{2^{w-1}}\geq \prod_{k=2}^{q-1}\rho_k.$ It remains to prove that
\begin{equation}\label{last}
N\cdot \ell_{s-1}\ell_{s-2}^2\cdots \ell_{s-w}^{2^{w-1}}\leq C \ell_1^{N+1-q}\cdot \ell_s^{p-1}.
\end{equation}
In the case under consideration, we have $\ell_k=\ell_1^{2^{k-1}}.$ Therefore, the left side of \eqref{last} is $N\,\ell_1^{\,w 2^{s-2}}$ and
$C \ell_1^{N+1-q+(p-1)2^{s-1}}$ is on the right. Since $N+1<2^{s+1},$ we reduce \eqref{last} to $2^{s+1} \ell_1^{2^{s-2}(w -2p-6+q)}\leq C,$
which holds for $s\geq 3$ provided $w=2p+6.$

Small values of $s$ do not cause problems, since $|\omega_{N+1}^{(q)}|=(N+1)!$ for such $s$ and given $q$ and $|\tilde{\omega}_n|_p\leq  c'_pNt_0$
with $N\leq 6$ and $t_0\leq \ell_2^{-p}.$
\end{proof}

\vspace{1mm}

\begin{proposition}\label{>2}
If $\alpha > 2$ then \eqref{seom} is not valid on $K^{\alpha}$.
\end{proposition}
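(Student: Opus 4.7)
The plan is to argue by contradiction: assuming \eqref{seom} holds for some $\alpha>2$, I would convert it into a continuous extension operator $\mathcal{E}(K^{\alpha})\to C^{\infty}(\RR)$, contradicting the result of \cite{A=2} that $K^{\alpha}$ lacks the extension property when $\alpha>2$. The vehicle is the Faber basis $(\omega_n)_{n=0}^{\infty}$ of $\mathcal{E}(K^{\alpha})$ provided by Theorem 1 of \cite{CA} (applicable since $\alpha\geq 2$): every $f\in\mathcal{E}(K^{\alpha})$ expands as $f=\sum_{n}\xi_n(f)\omega_n$ in the natural topology, and I would extend the series termwise via the individual extensions $\tilde{\omega}_n$ supplied by \eqref{seom}.

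First I would promote $(\omega_n)$ from a Schauder basis to an absolute basis, which is automatic because $\mathcal{E}(K^{\alpha})$ is nuclear (as a quotient of the nuclear space $C^{\infty}(I)$): for each $p$ there exist $q$ and $A_p$ with $\sum_{n=0}^{\infty}|\xi_n(f)|\cdot\|\omega_n\|_p\leq A_p\,\|f\|_q$ for every $f$. Next I would set $Wf(x):=\sum_{n=0}^{\infty}\xi_n(f)\,\tilde{\omega}_n(x)$ on a segment $I$ containing $\bigcup_n K^{\alpha}_{\delta_n}$; combining the absolute basis bound with \eqref{seom} and the trivial estimate $|\omega_n|_{q'}\leq\|\omega_n\|_{q'}$ yields, for every $p$, constants $C_p$ and $q''$ such that $\sum_n|\xi_n(f)|\,|\tilde{\omega}_n|_p\leq C_p\,\|f\|_{q''}$. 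Hence the series defining $Wf$ converges uniformly on $I$ together with all its termwise derivatives, placing $Wf$ in $C^{\infty}(I)$ and making $W$ a continuous linear map. Finally I would verify that $Wf$ equals $f$ on $K^{\alpha}$ as a Whitney jet: the cutoff $u_{\delta_n}$ built from $\varphi$ in \eqref{FF} is $C^{\infty}$-flat at the endpoints of the complementary intervals of $K^{\alpha}$, because all right-hand derivatives of $\varphi$ vanish at $0$; these endpoints are dense in $K^{\alpha}$, so continuity propagates the flatness to all of $K^{\alpha}$, and Leibniz then gives that $\tilde{\omega}_n$ and $\omega_n$ share the same jet on $K^{\alpha}$. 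Therefore $Wf|_{K^{\alpha}}=\sum_n\xi_n(f)\omega_n|_{K^{\alpha}}=f$, so $W$ is a genuine extension operator (composed if needed with a smooth cutoff to land in $C^{\infty}(\RR)$), and $K^{\alpha}$ has the extension property — contradicting \cite{A=2}.

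The hardest step to pin down, as I see it, is the jet-preservation verification, and this is precisely why the specific cutoff in \eqref{FF} was introduced in Section 6. A generic $C^{\infty}$ cutoff satisfying $u_\delta=1$ on $K$ would not, on its own, guarantee that all higher derivatives of $u_\delta$ vanish on $K^{\alpha}$; yet the agreement $Wf=f$ as jets on $K^{\alpha}$ requires exactly this. The remaining ingredients — nuclearity promoting a Schauder basis to an absolute basis and term-by-term differentiability of an absolutely summable series — are standard and need no computation specific to $\alpha>2$. In this sense the proposition is a formal consequence of the failure of the extension property combined with the existence of the Faber basis $(\omega_n)$.
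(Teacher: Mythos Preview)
Your proposal is correct and follows essentially the same route as the paper's proof: both argue by contradiction, invoking the absolute Faber basis $(\omega_n)$ for $\mathcal{E}(K^\alpha)$ from \cite{CA} to build the termwise extension $W(f)=\sum_n\xi_n(f)\,\tilde{\omega}_n$ and then contradicting the known failure of the extension property for $\alpha>2$ (the paper cites \cite{ag97} at this point rather than \cite{A=2}). The paper's proof is a three-line sketch; your version fills in the details the paper leaves implicit---nuclearity to pass from a Schauder to an absolute basis, and the explicit jet-preservation check via the flatness of $\varphi$ at $0$---but these are elaborations of the same argument, not a different method.
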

\begin{proof}
By \cite{CA}, the polynomials $(\omega_n)_{n=0}^{\infty}$ form an absolute topological basis in the space  ${\mathcal E}(K^{\alpha})$.
If \eqref{seom} holds then
\begin{equation}\label{EO}
W(f)=\sum_{n=0}^{\infty}\xi_n(f)\tilde{\omega}_n
\end{equation}
is a linear continuous extension operator. However, by \cite{ag97}, ${\mathcal E}(K^{\alpha})$ does not have $EP$ for $\alpha > 2$.
\end{proof}

The following example illustrates the above proposition. We will directly show the absence of \eqref{seom} for $\alpha > 2$.
We restrict ourselves to the same value of $\delta$ as in Proposition \ref{=2}. Let us consider the function $\varphi$ in more detail.
Let $\tau(x):=\exp\left(-\frac{1}{x}\right), Q_0(x):= x-1-x^2.$ Then $\varphi'(x)=\varphi(x)\,\tau(x)\,(x-x^2)^{-2}Q_0(x)$ and by induction
\begin{equation}\label{der}
\varphi^{(k)}(x)=\varphi(x)\,\tau(x)\,(x-x^2)^{-2k}Q_{k-1}(\tau(x))
\end{equation}
for $k\geq 2.$ Here, $Q_k(\tau(x))=Q_0(x)Q_{k-1}(\tau(x)) \tau(x)+ Q_{k-1}(\tau(x))r_k(x)+(1-x)^2\frac{dQ_{k-1}}{d\tau} \tau(x)$ with
$r_k(x):=(1-x)^2+2kx(1-x)(2x-1)$ for $k\in \Bbb N.$ From this it follows that the signs of $\varphi^{(k)}(x)$ and $Q_{k-1}(\tau(x))$ coincide and
$Q_k(\tau(1))=(-1)^{k+1} e^{-k}.$ Assume that $k\in \Bbb N$ is even. Then $\varphi^{(k)}(x)>0$ and $\varphi^{(k-1)}(x)<0$ near point 1.

We will denote by $\eta_k$  the number
$\min\{ \eta: |Q_k(\tau(x))-Q_k(\tau(1))|\leq \frac{1}{2e^k} \,\,\,\,\,\mbox {for}\,\,\,\,\,\,\eta\leq x\leq 1\}.$
Let $\theta_k:=\max\{ \eta_k, \eta_{k-1}, 1-\frac{1}{4\sqrt{ke}}\}.$ Then

\begin{equation}\label{at 1}
\theta_k\,Q_{k-1}(\tau(\theta_k))-2k (\theta_k-\theta_k^2)^2 |Q_{k-2}(\tau(\theta_k))|>\frac{1}{2e^k}.
\end{equation}
Indeed, $\theta_k>\frac{7}{8}$ for $k\geq 2,\,Q_{k-1}(\tau(\theta_k))\geq \frac{1}{2e^{k-1}}$ and $|Q_{k-2}(\tau(\theta_k))|\leq \frac{3}{2e^{k-2}}.$
Also, $\theta_k^2<1$ and $(1-\theta_k)^2\leq \frac{1}{16 ke}.$ Therefore, the left side of \eqref{at 1} exceeds
$\frac{7}{16 e^{k-1}} -\frac{3}{16 e^{k-1}},$ which gives the desired inequality. Let
$A_k:=\varphi(\theta_k)\tau(\theta_k)(\theta_k-\theta_k^2)^{-2k}\frac{1}{2e^k}.$

{\bf Example.} Assume that $\alpha > 2$. Let $N=2^s, \delta=\ell_s.$ Then there is $p$ and $z$ such that for each $q$ and $C$ we have for sufficiently large $s$
\begin{equation}\label{no62}
|\tilde{\omega}^{(p)}_N(z)|>C |\omega_N|_q.
\end{equation}
Let, as above, $Z=(x_k)_{k=1}^N, d_k(x):=d_k(x,Z).$ In view of the structure of the set, we have $d_k(1)\geq d_k(x)$ for each $x\in K^{\alpha}$ if $k\geq 2,$
so, by \eqref{om}, $|\omega_N|_q\leq N^q \prod_{k=q+1}^N d_k(1)$ for $q\geq 1.$

Let us fix an even $p>\frac{2\alpha-3}{\alpha-2}$ and $z=1+\theta_p \delta$. We aim to estimate $|\tilde{\omega}^{(p)}_N(z)|$ from below.
Since $u_{\delta}(x)=\varphi(\frac{x-1}{\delta})$ for $1<x<1+\delta,$ we have $u_{\delta}^{(j)}(z)=\ell_s^{-j} \varphi^{(j)}(\theta_p).$
It follows that
$$ \tilde{\omega}^{(p)}_N(z)=\omega_N(z)\,\ell_s^{-p}\varphi^{(p)}(\theta_p)+ p\omega'_N(z)\,\ell_s^{-p+1}\varphi^{(p-1)}(\theta_p)+
\sum_{j=2}^p  \binom{p}{j} \omega^{(j)}_N(z)\ell_s^{-p+j}\varphi^{(p-j)}(\theta_p).$$
Since $p$ is even, the first term on the right-hand side is positive and the second is negative.
In our case, all values $\omega^{(j)}_N(z)$ are positive. In particular, $\omega_N(z)=\prod_{k=1}^N d_k(z)=\theta_p \ell_s \cdot \prod_{k=2}^N d_k(z)$
and $\omega'_N(z)=\sum_{i=1}^N \prod_{k=1,k\ne i}^N d_k(z)=\prod_{k=2}^N d_k(z) [1+d_1(z)\sum_{i=2}^N d^{-1}_k(z)],$  where the expression in square
brackets is smaller than 2, as is easy to check. By \eqref{om}, $\omega^{(j)}_N(z)\leq N^j \prod_{k=j+1}^N d_k(z),$ so the modulus of the last sum in
the above representation of $ \tilde{\omega}^{(p)}_N(z)$ can be estimated from above by $C_1\sum_{j=2}^p t_j,$ where
$C_1=\max_{0\leq j \leq p-2}|\varphi|_{j,[0,1]}$ and $t_j= \binom{p}{j}N^j\,\ell_s^{-p+j} \prod_{k=j+1}^N d_k(z).$ It is a simple matter to show
$2 t_{j+1}<t_j.$ Hence, $C_1\sum_{j=2}^p t_j\leq 2\,C_1\,t_2.$ Therefore,
$$ \tilde{\omega}^{(p)}_N(z)\geq \ell_s^{-p+1} \prod_{k=2}^N d_k(z)[ \theta_p \varphi^{(p)}(\theta_p) -2p\varphi^{(p-1)}(\theta_p)-2\,C_1p^2N^2\ell_s d_3^{-1}(z)].$$
By \eqref{der}, $ \theta_p \varphi^{(p)}(\theta_p) -2p\varphi^{(p-1)}(\theta_p)= A_p.$ The last term in brackets is arbitrarily small for sufficiently large $s$.
It remains to prove that for each $C$ and $q$ there is $s_0$ such that $\ell_s^{-p+1} \prod_{k=2}^N d_k(z)>C N^q \prod_{k=q+1}^N d_k(1)$ for $s>s_0$.
 Of course, $d_k(z)>d_k(1)$ for each $k$, so we reduce the desired inequality to $C N^q \ell_s^{p-1}<\prod_{k=2}^q d_k(1).$
Let $2^{w-1}<q\leq 2^w$ for some $w$.  Then $\prod_{k=2}^q d_k(1)\geq \prod_{k=2}^{2^w} d_k(1).$ Here, $d_2(1)=\ell_{s-1}$ In its turn,  $\ell_{s-2}-\ell_{s-1}\leq d_3,d_4\leq \ell_{s-2}, \ldots,  \ell_{s-w}-\ell_{s-w+1}\leq d_{2^{w-1}+1},\ldots, d_{2^w} \leq \ell_{s-w}.$
Thus $\prod_{k=2}^{2^w} d_k(1)\geq \pi_w \cdot \ell_{s-1}\ell^2_{s-2} \cdots \ell^{2^{w-1}}_{s-w},$ where
$\pi_w=\prod_{k=1}^{w-1}\left(1-\frac{\ell_{s-k}}{\ell_{s-k-1}}\right)^{2^k}.$ An easy computation shows that $\pi_w>\frac{1}{2},$ so \eqref{no62} holds if
$$ 2C 2^{sq} \ell_s^{p-1}<\ell_{s-1}\ell^2_{s-2} \cdots \ell^{2^w}_{s-w}.$$
Here, the right side is $\ell_1^{\kappa}$ with $\kappa=\alpha^{s-2}+2\,\alpha^{s-3}+\cdots+ 2^{w-1}\,\alpha^{s-w-1}=\alpha^{s-2}[1+ \frac{2}{\alpha}+\cdots
+(\frac{2}{\alpha})^{w-1}]<\frac{\alpha^{s-1}}{\alpha-2}.$ On the other hand, $\ell_s^{p-1}=\ell_1^{(p-1)\alpha^{s-1}}$ with
$(p-1)\alpha^{s-1}-\kappa>\alpha^{s-1},$ due to the choice of $p$. Clearly, $2C 2^{sq} \ell_s<1$ for sufficiently large $s$.

\section{Continuity of Paw{\l}ucki-Ple\'{s}niak's operator for a non-Markov set}

Let points $Z_N=(x_k)_{k=1}^N$ be chosen in $K^2$ by the rule of increase of type and
\begin{equation} \label{Op}
W(f,x)= L_1(f,x;Z_1)\cdot u_{\delta_1}(x)+\sum _{N=1}^{\infty}[L_{N+1}(f,x;Z_{N+1})- L_N(f,x;Z_N)]\cdot u_{\delta_N}(x),
 \end{equation}
where $\delta_N=\ell_s$ for $2^s\leq N<2^{s+1}.$\\

Compare the proofs of the following theorem and T.\ref{WP}.
\begin{theorem} \label{GP}
The operator $W:{\mathcal E}(K^2) \longrightarrow C^{\infty}([-2,2])$ is bounded.
\end{theorem}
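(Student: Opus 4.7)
The plan is to mirror the proof of Theorem \ref{WP}, replacing the direct use of Markov's property by the four ingredients assembled in Sections~3--6: Lebesgue constants (Proposition \ref{Leb}), Markov factors (Theorem \ref{mf}), ultra-fast Jackson decay (Theorem \ref{JT} and Proposition \ref{mf-jt}), and simultaneous extensions of basic polynomials (Proposition \ref{=2}).

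The first step is to recast each term of \eqref{Op} via Newton's divided-difference formula. Since $Z_{N+1} \supset Z_N$, one has
\[
L_{N+1}(f,\cdot;Z_{N+1}) - L_N(f,\cdot;Z_N) = \xi_N(f)\,\omega_N,
\]
with $\omega_N(x) = \prod_{k=1}^N(x - x_k)$ and $\xi_N(f) = [x_1,\ldots,x_{N+1}]\,f$. Thus the $N$-th term of \eqref{Op} becomes $G_N = \xi_N(f)\,\tilde\omega_N$, where $\tilde\omega_N := \omega_N\,u_{\delta_N}$ is precisely the object controlled by \eqref{seom}.

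Fix $p \in \mathbb{N}$. Proposition \ref{=2} provides $q = q(p)$ and $C_1$ such that $|\tilde\omega_N|_{p,[-2,2]} \leq C_1\,|\omega_N|_{q,K^2}$ for all $N$. Since $\xi_N(f)$ is a scalar, $|G_N|_{p,[-2,2]} \leq C_1\,|P_N|_{q,K^2}$, where $P_N := \xi_N(f)\,\omega_N \in \mathcal{P}_N$. Combining the definition of $M_N^{(q)}(K^2)$, Lebesgue's lemma applied to $P_N = L_{N+1}(f) - L_N(f)$, and Proposition \ref{Leb} yields
\[
|G_N|_{p,[-2,2]} \leq C_2\, h_0^{-N}(N+1)\, M_N^{(q)}(K^2)\, E_{N-1}(f, K^2).
\]
Applying Proposition \ref{mf-jt} with $q$ in place of $p$ and a free parameter $r > q$ provides $r_1 = r_1(r)$ such that, for all sufficiently large $N$,
\[
|G_N|_{p,[-2,2]} \leq C_3\, h_0^{-N}(N+1)\, \rho_{q+1}(N+1)\cdots\rho_r(N+1)\,\|f\|_{r_1}.
\]

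It remains to choose $r$ so that $\sum_N h_0^{-N}(N+1)\,\rho_{q+1}(N+1)\cdots\rho_r(N+1)$ converges. For $2^s \leq N+1 < 2^{s+1}$, the $\rho_k$ are a rearrangement of the $\ell_i$'s with multiplicities $\lambda_i(N+1)$ from \eqref{pi}; for $\alpha = 2$ each block of the form $\ell_{s-k}^{2^{k-1}}$ equals $\ell_1^{2^{s-2}}$, so $\rho_{q+1}\cdots\rho_r \leq \ell_1^{c\cdot 2^{s-2}}$ with a constant $c = c(r,q)$ that tends to $\infty$ as $r \to \infty$. Meanwhile $h_0 \geq \ell_1$ (from $\ell_1 \leq 1/3$) gives $h_0^{-N}(N+1) \leq N\,\ell_1^{-2^{s+1}}$. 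Choosing $r$ large enough that $c \geq 9$ makes each summand super-exponentially small in $2^s$, so the series converges geometrically. The initial term $L_1(f)\,u_{\delta_1}$ and the finitely many small-$N$ corrections are bounded by $C\,\|f\|_0$. Altogether this gives $|W(f)|_{p,[-2,2]} \leq C\,\|f\|_{r_1}$, which is the required boundedness.

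The principal obstacle is the exponential factor $h_0^{-N}$ appearing both in the Lebesgue constant (Proposition \ref{Leb}) and in $M_N^{(q)}(K^2)$ (Theorem \ref{mf}), which the classical Jackson theorem could not absorb. The remedy is the Faber-basis Jackson inequality of Theorem \ref{JT}, which decays so fast that, at the cost of enlarging the regularity index $r_1$ of $f$, one can buy arbitrarily many small factors $\rho_k(N+1)$ to beat $h_0^{-N}(N+1)$. This is exactly the point where the hypothesis $\alpha = 2$ enters: for $\alpha > 2$ Proposition \ref{>2} shows that \eqref{seom} already fails, so the argument cannot even begin.
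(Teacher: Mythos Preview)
Your proof is correct and follows essentially the same route as the paper's: identify $G_N=\xi_N(f)\,\tilde\omega_N$ via Newton's form, invoke Proposition~\ref{=2} to pass from $|\tilde\omega_N|_p$ to $|\omega_N|_q$, then combine the Markov factor $M_N^{(q)}$, Lebesgue's lemma with Proposition~\ref{Leb}, and Proposition~\ref{mf-jt} to absorb the exponential factor $h_0^{-N}$ by choosing $r$ large. The only cosmetic difference is in the final bookkeeping: the paper arranges the $\rho_j$ factors so that $|G_N^{(p)}|\le \ell_s\|f\|_{r_1}$ and sums $\sum_s 2^s\ell_s$, whereas you count powers of $\ell_1^{2^{s-2}}$ directly; both arguments are equivalent.
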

\begin{proof} Let $G_N$ denote the $N-$th term of the series \eqref{Op}. Fix $f\in {\mathcal E}(K^2), p\in \Bbb N$ and $x\in \Bbb R.$
By Newton's form of the interpolation operator, $G_N(f,x)=\xi_N(f)\tilde{\omega}_N.$ We aim to show that $|G_N^{(p)}(f,x)|$ is a term
of a series that converges uniformly with respect to $x$. Proposition \ref{=2} gives $q$ and $C$ with
$|G_N^{(p)}(f,x)|\leq C\,|\xi_N(f)|\cdot|\omega^{(q)}_N|_0\leq C \, M_N^{(q)}\cdot |\xi_N(f)|\cdot|\omega_N|_0.$
Here and below, to simplify the writing, we omit the argument $K^2$ of $ M_N^{(q)}(\cdot), \Lambda_j(\cdot),$ and $E_j(f,\cdot).$

Next, $|\xi_N(f)|\cdot|\omega_N|_0=|L_{N+1}(f)- L_N(f)|\leq (\Lambda_{N+1}+1)E_N(f)+ (\Lambda_N+1)E_{N-1}(f)$ by the argument from Lebesgue's Lemma, so
$|\xi_N(f)|\cdot|\omega_N|_0\leq (\Lambda_{N+1}+\Lambda_N+2)E_{N-1}(f).$
Applying Proposition \ref{Leb} yields $\Lambda_{N+1}+\Lambda_N+2\leq 2h_0^{-N} N,$ as is easy to check. Therefore,
$$ |G_N^{(p)}(f,x)|\leq 2C\,N\,h_0^{-N}\, M_N^{(q)}\cdot E_{N-1}(f).$$

By Proposition \ref{mf-jt}, for each $r>q$ there exists $r_1$ such that
$$M_N^{(q)}\cdot E_{N-1}(f)\leq \rho_{q+1}(N+1)\cdots \rho_r(N+1)\,||\,f\,||_{\,r_1}.$$
Arguing in the same way as in the proof of Proposition  \ref{mf-jt}, we can take a sufficiently large number of terms $\rho_j(N+1)$ in such a way that
$\rho_{q+1}(N+1)\cdots \rho_m(N+1)\leq \ell_s$ and $2C\,N\,h_0^{-N}\,\rho_{m+1}(N+1)\cdots \rho_r(N+1)\leq 1.$ This gives
$ |G_N^{(p)}(f,x)|\leq \ell_s\,||\,f\,||_{\,r_1}.$ Of course, the series $\sum_{s=1}^{\infty} 2^s \ell_s$ converges.
 \end{proof}

{\bf Conclusions.}
At least for the considered case,\\
1. The operator $W$ is continuous not only in $\tau_J$, but also in the stronger Whitney topology.\\
2. This can be shown by a modification of Ple\'{s}niak's argument.\\
3. The difference between sets $K^{\alpha}$ with and without the extension property does not depend on the growth rate of Markov's factors but
rather on the existence of suitable individual extensions of $\omega_N$.\\
4. Since these polynomials form a topological basis in the corresponding Whitney space, the operator \eqref{Op} given by the interpolation method 
coincides with the operator \eqref{EO} obtained by extensions of the basis elements. This method goes back to Mityagin \cite{mi}.\\

This coincidence can be clearly observed in the following model case.

{\bf Example.} Fekete points $\mathcal X_n$  are known for $K=[-1,1]$, see, e.g., \cite{Sz}, p.382. They are zeros of $\omega_n(x):=(1-x^2) P_{n-2}^{1,1}(x),$ where $P_{n-2}^{1,1}$ is the Jacobi polynomial with the parameters $\alpha=\beta=1.$ By \cite{B}, for any admissible parameters, the Jacobi polynomials
form a basis in $C^{\infty}[-1,1]$. The value $\delta_n=n^{-2}$ defines $\tilde{\omega}_n$ and the extension operator \eqref{Op} that can be written as
$W(f)=\sum_{n=1}^{\infty}\xi_N(f)\tilde{\omega}_N.$

\end{document}